\documentclass[preprint,12pt]{elsarticle}

\usepackage{amsmath,amssymb,amsthm,mathtools}
\usepackage{booktabs}
\usepackage{graphicx}
\usepackage[section]{placeins}
\usepackage{microtype}
\usepackage{xurl}
\usepackage[hidelinks]{hyperref}
\hypersetup{
  pdftitle={Havel--Hakimi Residues of Common-Divisor Graphs: Complete Asymptotics and Prime-Counting Structure},
  pdfauthor={Randy Davila},
  pdfkeywords={Havel--Hakimi residue, common-divisor graph, prime-counting function, Caro--Wei bound, Riemann hypothesis}
}

\newcommand{\R}{\operatorname{R}}
\newcommand{\CW}{\operatorname{CW}}
\newcommand{\cP}{\mathcal{P}}
\newcommand{\cC}{\mathcal{C}}
\newcommand{\Li}{\operatorname{Li}}

\newtheorem{theorem}{Theorem}
\newtheorem{proposition}[theorem]{Proposition}
\newtheorem{lemma}[theorem]{Lemma}
\newtheorem{corollary}[theorem]{Corollary}
\newtheorem{conjecture}[theorem]{Conjecture}

\journal{European Journal of Combinatorics}

\begin{document}
\frenchspacing
\begin{frontmatter}

\title{Havel--Hakimi Residues of Common-Divisor Graphs:\\
Complete Asymptotics and Prime-Counting Structure}

\author[fp,rice]{Randy Davila\corref{cor1}}
\ead{randy@firstprinciples.com}
\cortext[cor1]{Corresponding author.}
\address[fp]{FirstPrinciples Inc., 77 King Street West, Suite 400,
Toronto, Ontario M5K 0A1, Canada}
\address[rice]{Department of Computational Applied Mathematics \&
Operations Research, Rice University, Houston, Texas, USA}

\begin{abstract}
Let $G_n$ be the graph on $\{2,\ldots,n\}$ in which two integers are adjacent
when they have a common divisor greater than one. We determine the complete
asymptotic expansion of its Havel--Hakimi residue $\R(G_n)$, confirming a
leading-constant prediction of Staton recorded in Fajtlowicz's
\emph{Written on the Wall}. If
$A=\sum_{k=2}^{\infty}(\log k)/(k^2(k-1))$, then the first two terms are
$\R(G_n)=(\zeta(2)-1)n/\log n+(\zeta(2)-1-A)n/\log^2n
+O(n/\log^3n)$. More precisely, the difference between $\R(G_n)$ and the
prime-vertex contribution to the Caro--Wei sum is
$O_\beta(n\exp\{-(\log n)^\beta\})$ for every fixed $0<\beta<1/2$; this
estimate yields every coefficient in the expansion. The upper bound follows
from a degree-preserving realization in which almost all relevant prime
vertices are partitioned into cliques. We also prove that the unlabeled graph
determines $\pi(n)$ through its simplicial true-twin classes. Stable inverses
for weighted sums of the resulting degree-class counts give criteria equivalent to the
Riemann hypothesis, including one involving only the Caro--Wei sum. An exact
local-defect identity additionally reduces the conjectured sharp $+2$ residue
bound to explicit prefix estimates.
\end{abstract}

\begin{keyword}
Havel--Hakimi residue \sep common-divisor graph \sep prime-counting function
\sep Caro--Wei bound \sep degree sequence \sep Riemann hypothesis
\MSC[2020] 05C69 \sep 05C85 \sep 11A41 \sep 11M26
\end{keyword}

\end{frontmatter}

\clearpage
\begin{center}
\emph{Dedicated to the memory of Paul Erd\H{o}s and William Staton, whose
ideas initiated the asymptotic residue problem studied here, with special
gratitude to Bill Staton, whose friendship first welcomed the author into the
mathematical community.}
\end{center}

\section{Introduction}

For a finite simple graph $G$, the \emph{Havel--Hakimi residue} $\R(G)$ is
obtained by repeatedly laying off the largest term of the degree sequence.
At each step, the largest term $d$ is deleted, the next $d$ terms are reduced
by one, and the resulting sequence is reordered. The number of zeros that
remain when no positive term remains is the residue. Thus $\R(G)$ depends
only on the degree sequence, not on the particular graph realizing it.
The underlying reduction is the classical Havel--Hakimi procedure
\cite{Havel1955,Hakimi1962}; later work on the residue includes, for example,
the exact analysis for unigraphs in \cite{Barrus2012}.
Fajtlowicz's program
\emph{Graffiti} conjectured that
\(
 \R(G)\leq\alpha(G),
\)
where $\alpha(G)$ is the independence number. Favaron, Mah\'eo, and Sacl\'e
proved this conjecture and established several stronger degree-sequence
consequences; shorter proofs followed
\cite{FavaronMaheoSacle1991,GriggsKleitman1994}. The result is one of the
early examples of a computer-generated conjecture leading to a substantial
theorem in graph theory.

The present paper concerns the arithmetic family
\[
 V(G_n)=\{2,\ldots,n\},\qquad
 \{x,y\}\in E(G_n)\quad\Longleftrightarrow\quad \gcd(x,y)>1.
\]
These graphs have been called non-coprime graphs or common-divisor graphs.
They are closely related to divisor graphs and to the complementary coprime
graphs; see, for example,
\cite{IranmaneshPraeger2010,Knill2016,RaviDesikan2023}.
Write $d_G(v)$ for the degree of $v$, $N_G(v)$ for its open neighborhood,
and $N_G[v]=N_G(v)\cup\{v\}$ for its closed neighborhood. We omit the
subscript when the graph is clear. As usual, $\alpha(G)$ denotes the largest
cardinality of an independent vertex set, $G[X]$ the subgraph induced by
$X\subseteq V(G)$, and $\pi(x)$ the number of primes not exceeding $x$.

The prime vertices form an independent set in $G_n$. Conversely, assign to
each vertex in an independent set one of its prime divisors, assigning a prime
vertex to itself. Two vertices cannot receive the same prime, because they
would then be adjacent. This injection into the primes at most $n$ proves
\(
 \alpha(G_n)=\pi(n),
\)
an observation appearing explicitly in work of Rilwan et al.
\cite{RilwanSameemaOli2020} and in Fajtlowicz's historical account
\cite{FajtlowiczExamples}.
The second degree-sequence statistic used throughout is the \emph{Caro--Wei
sum}
\[
 \CW(G)=\sum_{v\in V(G)}\frac{1}{d_G(v)+1}.
\]

\paragraph{Historical provenance.}
The residue problem for this family has a longer history. Conjecture 448 of
Fajtlowicz's evolving collection \emph{Written on the Wall} asks whether
\(\R(G_n)\geq\sqrt n\) \cite{FajtlowiczWall}. Notes appended to the
conjecture record that Erd\H{o}s and Staton obtained
\[
 \R(G_n)\geq
 \left(\frac{\pi^2}{6}-1-o(1)\right)\frac{n}{\log n}.
\]
They also record Fajtlowicz's earlier proposal of the constant $2/3$,
computations through $n=10{,}000$, and Staton's heuristic that the lower
constant $\pi^2/6-1$ is the correct one. Erd\H{o}s asked for any constant
$c>0$ such that
\(
 \R(G_n)\leq(1-c)n/\log n.
\)
Fajtlowicz later described this as an Erd\H{o}s problem arising from
\emph{Graffiti} \cite{FajtlowiczExamples}. An undated bibliographic entry
records a preprint on this problem by Chung, Erd\H{o}s, Lagarias, and Staton,
but, as of August 2026, no manuscript or theorem statement is linked from
that entry \cite{ChungListing}. We make no claim about its contents. Our
novelty comparisons are therefore restricted to the accessible sources
cited here; among those sources, neither a matching asymptotic upper bound
nor a complete asymptotic expansion appears with a published proof.

No full written proof of the Erd\H{o}s--Staton lower bound has been located in
the accessible record. The lower-bound derivation given below was obtained
independently from the prime degree classes and may therefore reconstruct part
of their unpublished argument. The surviving sketch makes such overlap
plausible, but without their manuscript or notes we cannot determine whether
the proofs coincide. We state this possibility explicitly so that an
independent reconstruction is not mistaken for historical priority.

The historical division of credit is important. The asymptotic question,
the lower-bound argument at scale $n/\log n$, and the conjectured leading
constant belong to the Erd\H{o}s--Staton program recorded by Fajtlowicz. The
new contributions here are the matching degree-preserving upper construction,
the complete expansion, the intrinsic graph reconstructions and
prime-counting criteria, and the residue-defect theory. The historical and
novelty claims made here rely only on the accessible sources cited above.

Our main result resolves the asymptotic problem and determines every term.
Write
\begin{equation}\label{eq:constants}
 c_0=\zeta(2)-1,
 \qquad
 A=\sum_{k=2}^{\infty}\frac{\log k}{k^2(k-1)}.
\end{equation}
Here $\zeta$ denotes the Riemann zeta function, and all logarithms are
natural. Implied constants in $O_\beta(\cdot)$ and $O_M(\cdot)$ may depend
on the displayed subscript but not on $n$.
For $r\geq0$, define
\begin{equation}\label{eq:all-order-coefficients}
 c_r=r!\left(
 1-\sum_{k=2}^{\infty}\frac{1}{k^2(k-1)}
       \sum_{t=0}^{r}\frac{(\log k)^t}{t!}
 \right).
\end{equation}
Thus $c_0=\zeta(2)-1$ and $c_1=\zeta(2)-1-A$.
Let $\cP(n)$ denote the contribution of the prime vertices to
$\CW(G_n)$; its exact definition is given in \eqref{eq:prime-mass}.

\begin{theorem}\label{thm:main}
For every fixed $0<\beta<1/2$,
\begin{equation}\label{eq:stretched-prime-mass}
 0\leq \R(G_n)-\cP(n)
 =O_\beta\!\left(n\exp\{- (\log n)^\beta\}\right).
\end{equation}
Consequently, for every fixed integer $M\geq1$,
\begin{equation}\label{eq:complete-expansion}
 \R(G_n)=n\sum_{r=0}^{M-1}\frac{c_r}{\log^{r+1}n}
 +O_M\!\left(\frac{n}{\log^{M+1}n}\right).
\end{equation}
In particular,
\begin{equation}\label{eq:main-expansion}
 \R(G_n)=
 c_0\frac{n}{\log n}
 +(c_0-A)\frac{n}{\log^2 n}
 +O\!\left(\frac{n}{\log^3 n}\right).
\end{equation}
Consequently,
\begin{equation}\label{eq:ratio-expansion}
 \frac{\R(G_n)}{\pi(n)}
 =c_0-\frac{A}{\log n}
 +O\!\left(\frac{1}{\log^2 n}\right).
\end{equation}
\end{theorem}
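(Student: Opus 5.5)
The plan is to prove \eqref{eq:stretched-prime-mass} by a sandwich argument and then read off \eqref{eq:complete-expansion} from an explicit formula for $\cP(n)$.

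\emph{Prime degrees.} A prime $p\le n$ is adjacent in $G_n$ exactly to its other multiples, so $d(p)=\lfloor n/p\rfloor-1$ and its Caro--Wei weight is $1/\lfloor n/p\rfloor$. Grouping primes into the classes $I_k=(n/(k+1),n/k]$ and summing by parts gives
\[
 \cP(n)=\sum_{k\ge1}\frac{\pi(n/k)-\pi(n/(k+1))}{k}
 =\pi(n)-\sum_{k\ge2}\frac{\pi(n/k)}{k(k-1)} .
\]

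\emph{Lower bound.} I would invoke the Favaron--Mah\'eo--Sacl\'e theorem that $\R(G)\ge\CW(G)$ for every graph. Since $\CW(G_n)\ge\cP(n)$, this gives $\R(G_n)-\cP(n)\ge0$ immediately.

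\emph{Upper bound.} Here I would use the other half of the classical theory. $\R$ depends only on the degree sequence and $\R\le\alpha$, so $\R(G_n)\le\alpha(H)$ for every realization $H$ of the degree sequence of $G_n$. Fix $K=\exp\{(\log n)^\beta\}$ and split the primes into two groups.
\begin{itemize}
 \item Primes $p\le n/K$ are few: there are $\pi(n/K)=O(n/K)$ of them, so they cost only $O(n/K)$ and are left untouched.
 \item For each $k\le K$, the primes of $I_k$ all have degree $k-1$. I want to rewire them into disjoint copies of $K_k$, leaving at most $k$ leftover primes per class.
\end{itemize}
The rewiring uses degree-preserving $2$-switches, one pair of primes $p,p'\in I_k$ at a time. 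Remove the edges $p$--$ap$ and $p'$--$bp'$, where $ap$ and $bp'$ are non-adjacent composites, and add the edges $p$--$p'$ and $ap$--$bp'$. Such pairs $a,b\le k$ exist because we may choose $a,b$ coprime and $p,p'$ are distinct large primes. Doing this systematically turns each block of $k$ primes into a clique whose former neighbours are now joined among themselves.

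In the resulting graph $H$, an independent set meets each clique at most once. This gives at most $\sum_{k\le K}|I_k\cap\mathbb P|/k+O(K^2)$ from the prime classes, and that first sum is $\cP(n)$ up to $O(n/K)$. It meets the small primes in $O(n/K)$ vertices. Finally, it meets the composites in at most $\pi(\sqrt n)$ vertices, since every composite has a prime factor $\le\sqrt n$ and the switches only \emph{add} edges among composites. Hence
\[
 \R(G_n)-\cP(n)=O\bigl(n/K+K^2+\sqrt n\bigr)=O_\beta\bigl(n\exp\{-(\log n)^\beta\}\bigr).
\]

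\emph{Main obstacle.} I expect the hard step to be making the switching construction rigorous and simultaneous across all $k\le K$. The composite endpoints $ap,bp'$ used for different pairs must be distinct, and the new composite--composite edges must not already exist. The plan is to choose the pairs greedily inside each $I_k$, using $a=2$, $b=3$ when both $2p$ and $3p'$ are at most $n$, and analogous choices for other $k$. The bookkeeping should show that only $O(k)$ primes per class remain unmatched.

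\emph{Expansion.} Once \eqref{eq:stretched-prime-mass} is established, I would insert the prime number theorem with de la Vall\'ee Poussin error, $\pi(x)=\Li(x)+O(x\exp\{-c\sqrt{\log x}\})$, into the formula for $\cP(n)$. The tail $k>K$ is negligible by the above, and the de la Vall\'ee Poussin error is dominated by the stretched-exponential error precisely because $\beta<1/2$. Expanding $\Li(n/k)=\sum_{r}r!\,(n/k)/\log^{r+1}(n/k)$ and $\log^{-j}(n/k)$ in powers of $\log k/\log n$ then collects the coefficient of $n/\log^{r+1}n$ as exactly the $c_r$ of \eqref{eq:all-order-coefficients}. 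The leading coefficient is $c_0=1-\sum_{k\ge2}1/(k^2(k-1))=\zeta(2)-1$, and the next is $c_1=c_0-A$. This gives \eqref{eq:complete-expansion} and \eqref{eq:main-expansion}. Dividing by $\pi(n)=n/\log n+n/\log^2n+O(n/\log^3n)$ yields \eqref{eq:ratio-expansion}.
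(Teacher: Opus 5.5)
Your lower bound and your treatment of $\cP(n)$ are fine, but the upper-bound construction has a genuine gap. The within-block $2$-switches cannot produce cliques in any even class $P_k(n)$, and this is not a matter of $O(k)$ leftover primes.

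\begin{itemize}
\item A switch on $p,p'$ consumes one edge $p$--$rp$ at each end. So in a block of $k$ primes, each multiplier $2,\dots,k$ of each prime is used exactly once.
\item The new edge $ap$--$bp'$ must be absent from $G_n$, i.e.\ $\gcd(a,b)=1$. Hence two even multipliers can never be paired.
\item For even $k$, a block has $k^2/2$ even-multiplier endpoints but only $k(k-2)/2$ odd ones. So at most $k(k-2)/2<\binom{k}{2}$ of the required switches exist.
\item For $k=2$ not a single switch is possible, since $2p$ and $2p'$ are already adjacent.
\end{itemize}

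$P_2(n)$ alone contains $\sim n/(6\log n)$ primes, and the even classes together hold roughly a $(1-\log 2)$-fraction of all primes. So the independence bound your argument yields exceeds $\cP(n)$ by $\gg n/\log n$, and you lose even the leading constant $\zeta(2)-1$. Choosing the pairs more cleverly does not help. Even-multiplier endpoints are pairwise adjacent and, summed over all classes, outnumber the odd-multiplier ones. Hence no rewiring that only adds edges among the endpoints $rp$ can restore their degrees.

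The missing idea is the paper's auxiliary repair, which uses vertices outside the endpoint set and compensates them by deleting an edge.

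\begin{itemize}
\item An even block is first completed locally with the cyclic edges $(2s)p_i$--$(2s+1)p_{i+1}$. This leaves exactly $kp_1,\dots,kp_k$ deficient by one.
\item Two deficient vertices $a,b$ are repaired by deleting an edge $cd$ and adding $ac$ and $bd$. Here $c,d$ are unused composites whose least prime factor is $q_k$, the least prime not dividing $k$. They are coprime to $k$ and to the block primes, and lie outside $\mathcal F_n$.
\item This needs the reservoir estimate $|\mathcal S_k(n)|>2\pi(n)+2$. It compares a reservoir of size $\gg n/(\log K\log\log K)$ with $|\mathcal F_n|\ll n\log K/\log n$, the latter from Brun--Titchmarsh.
\end{itemize}

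That comparison is exactly where $\beta<1/2$ is used. It is not needed to dominate a de la Vall\'ee Poussin error, which the expansion does not require.

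Because composite--composite edges are now deleted, your composite estimate must also change. Each least-prime-factor class becomes a clique minus a matching, giving $2\pi(\sqrt n)$ rather than $\pi(\sqrt n)$.

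For odd $k$ your per-pair switches can be arranged, for instance by joining $p_i$ to $p_{i+s}$ through the multipliers $2s$ and $2s+1$. So the defect is confined to the even classes, but there it is fatal.
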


The stretched-exponential estimate is stronger than any fixed-order
asymptotic statement and is the quantitative core of the paper. Its proof
separates an arithmetic lower estimate from a graph-theoretic upper
construction. A prime $p$ with
\(\lfloor n/p\rfloor=k\) has degree $k-1$. Its contribution to the
Caro--Wei sum is therefore $1/k$. Summing these contributions gives an exact
prime-interval formula whose complete asymptotic expansion is given by
\eqref{eq:prime-mass-complete}; the total contribution of the composite
vertices is only $O(\sqrt n)$. Since Favaron et al. proved that the residue
dominates the Caro--Wei sum, this yields the lower estimate in Section~2.

The upper estimate requires a different use of the degree sequence. The
residue depends only on that sequence and is a lower bound for the
independence number of every graph realizing it. We construct a realization
in which almost all primes of degree $k-1$ lie in disjoint copies of $K_k$.
The removed prime-to-composite incidences are restored by switches inside
classes of composites having a common least prime factor. Those classes
remain cliques with a matching removed and hence contribute negligibly to an
independent set. This turns Staton's informal clique heuristic into a
degree-preserving argument in Section~3.

The remaining sections develop two consequences of this proof. Section~4
returns to the Havel--Hakimi trajectory itself and expresses
$\R(G_n)-\CW(G_n)$ as a sum of nonnegative local defects. An absorbing
sequence of unit-band steps and an estimate determined by the initial degree
order statistics reduce the observed
sharp $+2$ inequality to an explicit estimate for the reductions preceding
the first unit-band step.
Sections~5 and~6 then ask how much prime-counting information survives when
the integer labels are discarded. The unlabeled graph determines $\pi(n)$,
and stable inverses for its simplicial twin-degree profiles preserve the
square-root error scale equivalent to the Riemann hypothesis. The Caro--Wei
sum yields a further criterion depending only on the degree sequence. Thus
the unconditional asymptotic theorem remains logically separate from the
finer, conditional prime-counting criteria.

The weighted degree-class construction also produces a one-parameter graph
polynomial interpolating between the isolated-prime class and the full prime
count. To
the best of the author's knowledge, neither this simplicial twin-degree
polynomial nor the associated theorems for weighted degree-class sums have appeared
previously.

For clarity, the logical status of the results is as follows. The complete
asymptotic expansion, the degree-preserving realization, the intrinsic
reconstruction of the prime count, and the weighted degree-class criteria are
unconditional theorems. The Riemann-hypothesis statements are equivalences,
not assumptions used in those theorems. Only the bounded residue defect and
its two proposed sufficient estimates remain conjectural.

\begin{figure}[htbp]
\centering
\includegraphics[width=\textwidth]{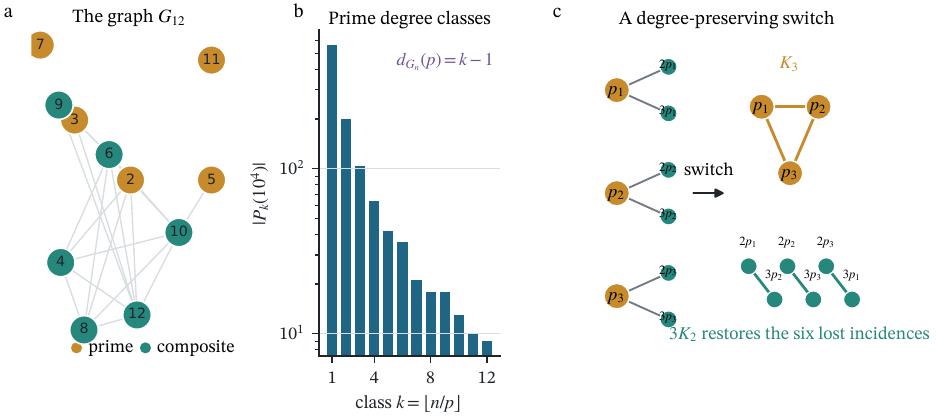}
\caption{Arithmetic structure used in the proof.
\textup{(a)} The common-divisor graph $G_{12}$; prime vertices are highlighted.
\textup{(b)} Prime vertices organize by the integer value of
$\lfloor n/p\rfloor$, which is exactly one more than their degree.
\textup{(c)} A local switch replaces three disjoint two-edge prime stars by a
prime triangle and three composite edges. Every displayed vertex retains its
degree; the proof performs compatible switches inside composite
least-prime-factor classes.}
\label{fig:mechanism}
\end{figure}

\section{Arithmetic structure and the prime-vertex Caro--Wei contribution}

We first isolate the part of the Caro--Wei sum contributed by prime
vertices. Its degree classes admit an exact prime-interval formula, whereas
the entire composite contribution is only $O(\sqrt n)$. This separation
produces the lower estimate in Theorem~\ref{thm:main} and all coefficients
in its asymptotic expansion.

The classical Caro--Wei theorem gives $\CW(G)\leq\alpha(G)$
\cite{Caro1979,Wei1981}. The stronger residue comparison needed here is due
to Favaron et al. \cite{FavaronMaheoSacle1991}:
\begin{equation}\label{eq:fms-chain}
 \CW(G)\leq\R(G)\leq\alpha(G).
\end{equation}

For $k\geq1$, define the $k$th prime degree class by
\[
 P_k(n)=\left\{p\leq n:p\text{ is prime and }
                    \left\lfloor\frac{n}{p}\right\rfloor=k\right\},
 \qquad m_k(n)=|P_k(n)|.
\]
Thus
\begin{equation}\label{eq:mk}
 m_k(n)=\pi(n/k)-\pi(n/(k+1)).
\end{equation}
If $p\in P_k(n)$, then its neighbors are $2p,\ldots,kp$, so
\begin{equation}\label{eq:prime-degree}
 d_{G_n}(p)=k-1.
\end{equation}
Let
\begin{equation}\label{eq:prime-mass}
 \cP(n)=\sum_{p\leq n}\frac{1}{d_{G_n}(p)+1}
       =\sum_{k\geq1}\frac{m_k(n)}{k}
\end{equation}
denote the contribution of the prime vertices to $\CW(G_n)$.

\begin{proposition}\label{prop:prime-mass-exact}
For every $n\geq2$,
\begin{equation}\label{eq:prime-mass-pi}
 \cP(n)=\pi(n)-
 \sum_{k=2}^{\lfloor n/2\rfloor}
 \frac{\pi(n/k)}{k(k-1)}.
\end{equation}
Moreover,
\begin{equation}\label{eq:prime-mass-asymptotic}
 \cP(n)=
 c_0\frac{n}{\log n}
 +(c_0-A)\frac{n}{\log^2 n}
 +O\!\left(\frac{n}{\log^3 n}\right).
\end{equation}
More generally, for every fixed integer $M\geq1$,
\begin{equation}\label{eq:prime-mass-complete}
 \cP(n)=n\sum_{r=0}^{M-1}\frac{c_r}{\log^{r+1}n}
 +O_M\!\left(\frac{n}{\log^{M+1}n}\right).
\end{equation}
\end{proposition}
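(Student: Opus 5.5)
The plan has two parts: an exact summation by parts for \eqref{eq:prime-mass-pi}, and then the prime number theorem with a strong error term, fed into that identity, for the asymptotics.

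\emph{The exact formula.} Put $K=\lfloor n/2\rfloor$. For $k>K$ we have $n/k<2$, so $\pi(n/k)=0$ and $m_k(n)=0$ once $k\ge K+1$. Insert \eqref{eq:mk} into \eqref{eq:prime-mass} and apply Abel summation to the finite sum:
\[
\sum_{k=1}^{K}\frac{\pi(n/k)-\pi(n/(k+1))}{k}
=\pi(n)+\sum_{k=2}^{K}\pi(n/k)\Bigl(\frac1k-\frac1{k-1}\Bigr)
-\frac{\pi(n/(K+1))}{K}.
\]
The last term vanishes. Since $1/k-1/(k-1)=-1/(k(k-1))$, this is exactly \eqref{eq:prime-mass-pi}.

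\emph{The asymptotic expansion.} I will use the prime number theorem in the form $\pi(x)=\Li(x)+O(x\exp\{-c\sqrt{\log x}\})$, together with the expansion $\Li(x)=x\sum_{t=0}^{M-1}t!/\log^{t+1}x+O_M(x/\log^{M+1}x)$.

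Split the sum in \eqref{eq:prime-mass-pi} at $k\le L$ and $k>L$, with $L=\log^{M+2}n$ (any power of $\log n$ will do).

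For the tail $k>L$, the trivial bound $\pi(n/k)\le n/k$ gives
\[
\sum_{k>L}\frac{n}{k^2(k-1)}\ll\frac{n}{L^2},
\]
which is negligible.

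For $k\le L$, note that $\log(n/k)=\log n-\log k$ with $\log k\le(M+2)\log\log n$, which is small compared with $\log n$. Expand
\[
\frac{1}{\log^{j+1}(n/k)}=\frac{1}{\log^{j+1}n}\sum_{s\ge0}\binom{j+s}{s}\Bigl(\frac{\log k}{\log n}\Bigr)^s,
\]
truncating at total order $M$. The truncation error carries a factor $(\log k)^{M}$ over $\log^{M+1}n$, and $\sum_k(\log k)^M/k^3$ converges, so this error is $O_M(n/\log^{M+1}n)$. The PNT error term contributes $\ll n\exp\{-c\sqrt{\log n}/2\}$, which is also negligible. Once the expansion is in place, complete the $k$-sums to infinity; the cost is $O(n(\log L)^M/L^2)$.

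\emph{Collecting the coefficient of $n/\log^{r+1}n$.} Pairs $(j,s)$ with $j+s=r$ contribute
\[
\sum_k\frac{1}{k^2(k-1)}\sum_{j+s=r}j!\binom{r}{s}(\log k)^s=r!\sum_k\frac{1}{k^2(k-1)}\sum_{s=0}^{r}\frac{(\log k)^s}{s!}.
\]
Subtracting this from the $r!$ coming from $\pi(n)$ gives exactly $c_r$ in \eqref{eq:all-order-coefficients}. For the case $M=2$, use $\sum_{k\ge2}1/(k^2(k-1))=2-\zeta(2)$, which gives $c_0=\zeta(2)-1$ and $c_1=c_0-A$, yielding \eqref{eq:prime-mass-asymptotic}.

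The main obstacle is the uniform bookkeeping of the double expansion for $k\le L$. The key is to choose $L$ so that both the truncation errors in $\log k/\log n$ and the completed tails stay within $O_M(n/\log^{M+1}n)$; this works because the weights decay like $k^{-3}$.
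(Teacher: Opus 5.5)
Your proposal is correct and follows essentially the same route as the paper. You use the same summation by parts for \eqref{eq:prime-mass-pi}, then insert the prime number theorem's asymptotic series into each $\pi(n/k)$, expand $(\log n-\log k)^{-(j+1)}$ in powers of $\log k/\log n$, bound the tail by $\pi(n/k)\le n/k$, and collect the coefficient $r!\sum_{s\le r}(\log k)^s/s!$. The differences are cosmetic: you cut the sum at $k\le\log^{M+2}n$ instead of $k\le\sqrt n$, and you pass through $\Li$ with a de la Vall\'ee Poussin error term rather than quoting the expansion of $\pi$ directly. Neither changes the argument.
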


\begin{proof}
Substituting \eqref{eq:mk} into \eqref{eq:prime-mass} and summing by parts
gives
\[
 \sum_{k\geq1}\frac{\pi(n/k)-\pi(n/(k+1))}{k}
 =\pi(n)-\sum_{k\geq2}\frac{\pi(n/k)}{k(k-1)},
\]
which is \eqref{eq:prime-mass-pi}.

Set $L=\log n$. The Prime Number Theorem, in its standard asymptotic
expansion, gives \cite[Chapter~7]{MontgomeryVaughan2007}
\[
 \pi(x)=\frac{x}{\log x}+\frac{x}{\log^2 x}
        +O\!\left(\frac{x}{\log^3 x}\right).
\]
For $k\leq\sqrt n$, expansion about $L$ yields
\[
 \pi(n/k)=\frac{n}{kL}
 +\frac{n(1+\log k)}{kL^2}
 +O\!\left(\frac{n(1+\log k)^2}{kL^3}\right).
\]
Here $n/k\geq\sqrt n$, so the prime-number-theorem remainder is uniform over
the entire range, and $(\log k)/L\leq1/2$ makes the displayed expansion
uniform as well. The error remains summable after multiplication by
$1/[k(k-1)]$.
For $k>\sqrt n$, the elementary estimate $\pi(n/k)\leq n/k$ shows that the
tail in \eqref{eq:prime-mass-pi} is $O(1)$. Replacing the resulting finite
coefficient sums by their infinite counterparts changes the expansion by
$O(1/\log n)$, which is absorbed by the displayed remainder. Finally,
\[
 \sum_{k=2}^{\infty}\frac{1}{k^2(k-1)}=2-\zeta(2),
 \qquad
 \sum_{k=2}^{\infty}\frac{\log k}{k^2(k-1)}=A.
\]
Substitution in \eqref{eq:prime-mass-pi} proves
\eqref{eq:prime-mass-asymptotic}.

For arbitrary fixed $M\geq1$, use the standard expansion
\[
 \pi(x)=x\sum_{\ell=0}^{M-1}
 \frac{\ell!}{\log^{\ell+1}x}
 +O_M\!\left(\frac{x}{\log^{M+1}x}\right).
\]
Uniformly for $k\leq\sqrt n$, expand each power of
$(\log n-\log k)^{-1}$ through total order $M$. The coefficient of
$n/\log^{r+1}n$ in $\pi(n/k)$ is
\[
 \frac{r!}{k}\sum_{t=0}^{r}\frac{(\log k)^t}{t!}.
\]
After multiplication by $1/[k(k-1)]$, the remainders are summable because
\(
 \sum_{k\geq2}(1+\log k)^M/[k^2(k-1)]<\infty.
\)
The range $k>\sqrt n$ contributes $O(1)$ as above. Substitution in
\eqref{eq:prime-mass-pi} gives the coefficients in
\eqref{eq:all-order-coefficients} and proves
\eqref{eq:prime-mass-complete}.
\end{proof}

The composite contribution is lower order for a simple arithmetic reason.

\begin{lemma}\label{lem:composite-mass}
If $\cC_n$ is the set of composite vertices of $G_n$, then
\[
 \sum_{x\in\cC_n}\frac{1}{d_{G_n}(x)+1}=O(\sqrt n).
\]
Consequently, $\CW(G_n)$ has the same expansion as $\cP(n)$ through the
term of order $n/\log^M n$ for every fixed $M\geq1$; equivalently, the
composite contribution is absorbed by the remainder in every fixed
truncation of the complete expansion.
\end{lemma}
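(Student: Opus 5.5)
Every composite vertex $x\le n$ has least prime factor $q=\operatorname{lpf}(x)\le\sqrt n$. I will bound its degree from below using only its neighbors that are multiples of $q$. Every multiple of $q$ in $\{2,\ldots,n\}$ other than $x$ shares the factor $q$ with $x$, so
\[
 d_{G_n}(x)\geq \left\lfloor \frac{n}{q}\right\rfloor-1,
 \qquad\text{hence}\qquad
 \frac{1}{d_{G_n}(x)+1}\leq \frac{1}{\lfloor n/q\rfloor}\leq\frac{2q}{n},
\]
where the last step uses $n/q\geq\sqrt n\geq1$, so $\lfloor n/q\rfloor\geq n/(2q)$.

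Next I group the composite vertices by their least prime factor $q\le\sqrt n$. The number of composites with least prime factor $q$ is at most the number of multiples of $q$ up to $n$, namely $n/q$. Summing the bound above over each class gives a contribution of at most $(n/q)\cdot(2q/n)=2$ per prime $q\le\sqrt n$. The total composite contribution is therefore at most $2\pi(\sqrt n)=O(\sqrt n)$. One could even sharpen this to $O(\sqrt n/\log n)$, but that is not needed.

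For the consequence, I write $\CW(G_n)=\cP(n)+\sum_{x\in\cC_n}1/(d_{G_n}(x)+1)$ and invoke \eqref{eq:prime-mass-complete} from Proposition~\ref{prop:prime-mass-exact}. Since $\sqrt n=O_M(n/\log^{M+1}n)$ for every fixed $M$, the composite term is absorbed into the remainder. This gives the same expansion for $\CW(G_n)$.

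There is no real obstacle in this argument. The only point to watch is the lower bound on $\lfloor n/q\rfloor$, which needs $q\le\sqrt n$. That holds precisely because $x$ is composite: $x\geq q^2$.
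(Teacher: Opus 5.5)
Your proof is correct and rests on the same key bound as the paper: $d_{G_n}(x)+1\geq\lfloor n/q\rfloor\geq n/(2q)$, where $q\leq\sqrt n$ is the least prime factor of the composite $x$. The only difference is the final summation: the paper multiplies the uniform per-vertex bound $2/\sqrt n$ by the fewer than $n$ composites, while your grouping by least prime factor gives the slightly sharper total $2\pi(\sqrt n)$. Note that in your version $q\leq\sqrt n$ is what limits the number of classes; the inequality $\lfloor n/q\rfloor\geq n/(2q)$ itself needs only $q\leq n$.
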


\begin{proof}
Let $q$ be the least prime divisor of a composite integer $x\leq n$. Then
$q\leq\sqrt n$, and the closed neighborhood of $x$ contains every multiple
of $q$ not exceeding $n$. Hence
\[
 d_{G_n}(x)+1\geq\left\lfloor\frac{n}{q}\right\rfloor
 \geq\frac{n}{2q}
\]
for $n\geq4$. Each composite vertex therefore contributes at most
$2/\sqrt n$, and there are fewer than $n$ such vertices.
The cases $n=2,3$ are immediate.
\end{proof}

Equations \eqref{eq:fms-chain} and
\eqref{eq:prime-mass-complete} already recover the Erd\H{o}s--Staton lower
constant and provide every lower-order coefficient. The remainder of the
proof supplies a matching upper estimate with stretched-exponential error.

\section{A degree-preserving realization}

The structural core of the upper estimate is a new realization of the degree
sequence of $G_n$: a graph on the same vertex set having the same degree at
every vertex. We separate the argument into degree-preserving operations on
blocks of prime vertices, the construction of auxiliary composite sets,
simultaneous degree repair, and the final independence bound.

\subsection{Degree-preserving operations on prime blocks}

Fix $0<\beta<1/2$ and set
\[
 K=\left\lfloor\exp\{(\log n)^\beta\}\right\rfloor.
\]
For sufficiently large $n$, every prime in $P_k(n)$ with $k\leq K$ is larger
than $K$, because it exceeds $n/(K+1)$.

\begin{lemma}\label{lem:block-rewiring}
Fix $k\leq K$ and distinct primes
$B=\{p_1,\ldots,p_k\}\subseteq P_k(n)$. Delete the edges
$\{p_i,rp_i\}$ for $1\leq i\leq k$ and $2\leq r\leq k$, and add every edge
inside $B$. Then every prime in $B$ has its original degree, while every
vertex $rp_i$, $2\leq r\leq k$, has degree one less than in $G_n$.

For each $1\leq s\leq\lfloor(k-1)/2\rfloor$ and each $i$, with subscripts
read modulo $k$, add
\begin{equation}\label{eq:direct-switch}
 \bigl\{(2s)p_i,(2s+1)p_{i+1}\bigr\}.
\end{equation}
If $k$ is odd, these additions restore every deficient degree. If $k$ is
even, they restore every deficient degree except those of
$kp_1,\ldots,kp_k$; each of these remaining vertices is deficient by one.
All edges added in \eqref{eq:direct-switch} are new edges of a simple graph.
\end{lemma}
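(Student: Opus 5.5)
The proof is a direct degree count, and the only real care is in verifying simplicity, i.e.\ that no added edge is already present or added twice.

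\emph{Step 1 (first operation).} Each $p_i\in P_k(n)$ has neighbourhood exactly $\{2p_i,\ldots,kp_i\}$ by \eqref{eq:prime-degree}. Deleting these $k-1$ edges and joining $p_i$ to the other $k-1$ primes of $B$ leaves $d(p_i)=k-1$. The primes of $B$ are pairwise nonadjacent in $G_n$, so the clique edges are new.

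Each vertex $rp_i$ with $2\le r\le k$ loses exactly one edge. We need the vertices $rp_i$, as $(r,i)$ ranges, to be distinct. Since $r\le k\le K<p_j$ for all $j$, the equation $rp_i=r'p_j$ with $i\ne j$ would force $p_j\mid r$, which is impossible. Hence all $k(k-1)$ such vertices are distinct, and each is deficient by exactly one.

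\emph{Step 2 (repair).} For fixed $s$, the edges \eqref{eq:direct-switch} use the multiplier $2s$ on $p_i$ and $2s+1$ on $p_{i+1}$. As $i$ runs over $\mathbb{Z}/k$, each vertex $(2s)p_i$ and each vertex $(2s+1)p_j$ is used exactly once. The pairs $\{2s,2s+1\}$ for $1\le s\le\lfloor (k-1)/2\rfloor$ partition $\{2,\ldots,2\lfloor(k-1)/2\rfloor+1\}$.
\begin{itemize}
\item If $k$ is odd, this set is $\{2,\ldots,k\}$, so every deficient vertex gains exactly one edge.
\item If $k$ is even, it is $\{2,\ldots,k-1\}$, so exactly the vertices $kp_1,\ldots,kp_k$ remain deficient by one.
\end{itemize}
Since each vertex receives exactly one added edge, no added edge can be repeated.

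\emph{Step 3 (simplicity).} This is the main point needing care. We must check two things.

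First, each added edge joins distinct vertices. The endpoints $(2s)p_i$ and $(2s+1)p_{i+1}$ are distinct by Step 1, since $i\neq i+1 \pmod k$ once $k\ge2$. (If $k\le2$ there are no such edges.)

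Second, no added edge already lies in $G_n$. Indeed, $\gcd\bigl((2s)p_i,(2s+1)p_{i+1}\bigr)=1$. Here $p_i\nmid (2s+1)p_{i+1}$ because $p_i\ne p_{i+1}$ and $p_i>K\ge 2s+1$. Symmetrically $p_{i+1}\nmid 2sp_i$. Finally $\gcd(2s,2s+1)=1$. Coprime vertices are nonadjacent in $G_n$, and the edges deleted in Step 1 are prime--composite edges, so they are not among these. Therefore all added edges are new, and the resulting graph is simple.
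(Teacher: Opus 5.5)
Your proof is correct and follows essentially the same route as the paper. The prime degrees are preserved by trading the $k-1$ multiplier edges for the $k-1$ clique edges. The added edges are new because their endpoints are coprime: the multipliers are consecutive, and $p_i,p_{i+1}>K\geq 2s+1$. Running $i$ cyclically uses each multiplier-$2s$ and multiplier-$(2s+1)$ vertex exactly once, so for even $k$ only the multiplier-$k$ vertices remain deficient. Your extra checks are sound and make the simplicity claim slightly more explicit; these are the distinctness of the vertices $rp_i$, which the paper defers to Lemma~\ref{lem:realization}, and the non-repetition of added edges.
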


\begin{proof}
Each prime $p_i$ originally has the $k-1$ neighbors
$2p_i,\ldots,kp_i$. It loses these edges and gains the $k-1$ edges joining
it to the other vertices of $B$. This proves the first assertion.

For a proposed edge in \eqref{eq:direct-switch}, any common prime divisor of
its endpoints would divide both consecutive multipliers, or one of the
distinct block primes would divide the opposite multiplier. The first is
impossible, and the second is impossible because $p_i,p_{i+1}>K\geq2s+1$.
Thus the endpoints are coprime, so the edge is not present in $G_n$. As $i$
runs cyclically, every vertex with multiplier $2s$ or $2s+1$ occurs exactly
once. For odd $k$, the multiplier pairs are
$(2,3),(4,5),\ldots,(k-1,k)$; for even $k$, they are
$(2,3),(4,5),\ldots,(k-2,k-1)$, leaving precisely the multiplier-$k$
vertices deficient.
\end{proof}

\subsection{Auxiliary sets and simultaneous degree repair}

Lemma~\ref{lem:block-rewiring} completes an odd prime block locally. An even
block leaves one deficient composite endpoint for each of its $k$ primes.
We repair those endpoints in pairs by deleting edges from a large auxiliary
set of composites. The next definition constructs that auxiliary set
while keeping it disjoint from every endpoint already used.

For the remaining even classes, define $q_k$ to be the least prime that does
not divide $k$. Let
\[
 \mathcal F_n=
 \{rp:p\in P_j(n),\ 1\leq j\leq K,\ 2\leq r\leq j\}
\]
be the set of composite endpoints used by the prime-block operations, and put
\[
 \mathcal S_k(n)=
 \{q_kt\leq n:t\geq2,\ \gcd(t,k)=1\}\setminus\mathcal F_n.
\]

\begin{lemma}\label{lem:reservoir}
Uniformly for even $k\leq K$, every member of $\mathcal S_k(n)$ is composite,
has least prime factor $q_k$, and is coprime to $k$ and to every prime in
$P_k(n)$. Moreover, for all sufficiently large $n$,
\begin{equation}\label{eq:reservoir-large}
 |\mathcal S_k(n)|>2\pi(n)+2.
\end{equation}
\end{lemma}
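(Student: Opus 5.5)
The structural claims come first and are elementary. Since $k$ is even, $2\mid k$, so $q_k\geq3$. Moreover $q_k$ is at most the least prime exceeding the largest prime divisor of $k$, and more simply $\prod_{p<q_k}p\mid k$ gives $q_k=O(\log k)=O((\log n)^\beta)$. Take $x=q_kt\in\mathcal S_k(n)$ with $t\geq2$ and $\gcd(t,k)=1$. Then $x$ is composite. Every prime $\ell<q_k$ divides $k$, hence does not divide $t$, and $\ell\neq q_k$; so the least prime factor of $x$ is $q_k$. Next, $\gcd(q_k,k)=1$ by definition and $\gcd(t,k)=1$, so $\gcd(x,k)=1$. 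For a prime $p\in P_k(n)$ we have $p>n/(k+1)\geq n/(K+1)$. If $p\mid x$, then either $p=q_k$, which is impossible because $q_k$ is tiny while $p>K$ for large $n$, or $p\mid t$. In the latter case $x\geq q_kp>3n/(K+1)$, so $x$ is a multiple of $p$ with multiplier $x/p\leq k$. Hence $x=rp$ with $2\leq r\leq k$, and then $x\in\mathcal F_n$, which has been removed. This gives coprimality to every prime of $P_k(n)$. All of these estimates use only $K=\exp\{(\log n)^\beta\}=n^{o(1)}$, so they hold uniformly in $k$.

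For the size bound \eqref{eq:reservoir-large}, I would lower-bound the count of $t\leq n/q_k$ with $\gcd(t,k)=1$ and subtract $|\mathcal F_n|$. The first count is $\frac{n}{q_k}\frac{\varphi(k)}{k}+O(2^{\omega(k)})$ by Möbius inversion. Now $\varphi(k)/k\gg1/\log\log k$, and $2^{\omega(k)}\leq d(k)=K^{o(1)}$. Also $q_k\ll\log k\leq(\log n)^\beta$. So the count is at least $c\,n/((\log n)^\beta\log\log n)$ for large $n$, uniformly in even $k\leq K$. For $\mathcal F_n$, the bound
\[
|\mathcal F_n|\leq\sum_{j\leq K}(j-1)m_j(n)\leq\sum_{p>n/(K+1)}\frac{n}{p}
\]
can be controlled by Mertens' theorem: $\sum_{n/(K+1)<p\leq n}1/p=\log\frac{\log n}{\log(n/(K+1))}+O(1/\log n)\ll(\log n)^{\beta-1}$. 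This yields $|\mathcal F_n|\ll n(\log n)^{\beta-1}$. A cruder alternative is $|\mathcal F_n|\leq K\pi(n)$, but since $K$ is superpolylogarithmic that would be too weak, so the Mertens route is the one to use.

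Combining, $|\mathcal S_k(n)|\geq c\,n(\log n)^{-\beta}(\log\log n)^{-1}-Cn(\log n)^{\beta-1}-O(1)$. Because $\beta<1/2$, we have $-\beta>\beta-1$, so the main term dominates. It also dominates $2\pi(n)+2\ll n/\log n$, since $\beta<1$. The main obstacle is exactly this comparison of the reservoir with $|\mathcal F_n|$. It is here that $\beta<1/2$ enters, and the bounds on $q_k$ and $\varphi(k)/k$ must be uniform over $k\leq K$. If the Mertens estimate proved too tight, I would instead bound $\mathcal F_n\cap q_k\mathbb Z$ directly. Among multiples of $q_k$, the elements of $\mathcal F_n$ have the form $rp$ with $q_k\mid r$, which costs an extra factor $1/q_k$ and leaves comfortable room.
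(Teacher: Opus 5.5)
Your proposal is correct and follows the paper's proof. It uses the same multiplier argument placing any member divisible by a prime of $P_k(n)$ inside $\mathcal F_n$, the same reduced-residue count with $q_k\ll\log k$ and $\varphi(k)/k\gg1/\log\log k$, and the same comparison of $|\mathcal F_n|\ll n\log K/\log n$ with $n/(\log K\log\log K)$ via $2\beta<1$. The one substantive variation is that you bound $|\mathcal F_n|$ through $(j-1)m_j(n)\le\sum_{p\in P_j(n)}n/p$ and Mertens' theorem, where the paper applies Brun--Titchmarsh class by class; yours is slightly more elementary. Note also that the bound $x/p\le k$ comes from $x\le n$, not from $x>3n/(K+1)$, and that this same bound already handles the case $p=q_k$ without needing $n$ large.
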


\begin{proof}
Every prime smaller than $q_k$ divides $k$. If $\gcd(t,k)=1$, none of those
smaller primes divides $t$, while $q_k$ divides $q_kt$. Hence $q_k$ is the
least prime factor of $q_kt$. The condition $t\geq2$ makes the product
composite, and $q_kt$ is coprime to $k$ because both $q_k$ and $t$ are
coprime to $k$.

Suppose that $p\in P_k(n)$ divides a member $c$ of the set before
$\mathcal F_n$ is removed.
Since $c\leq n$ and $\lfloor n/p\rfloor=k$, we have $c=rp$ for some
$1\leq r\leq k$. The vertex $c$ is composite, so $r\geq2$ and
$c\in\mathcal F_n$, contrary to $c\in\mathcal S_k(n)$. Thus the stated
coprimality holds.

Counting reduced residue classes in complete blocks of length $k$ gives
\begin{equation}\label{eq:raw-reservoir}
 \bigl|\{q_kt\leq n:t\geq2,\ \gcd(t,k)=1\}\bigr|
 =\frac{n}{q_k}\frac{\varphi(k)}{k}+O(k).
\end{equation}
Let $\vartheta(y)=\sum_{p\leq y}\log p$ be Chebyshev's first function and
let $\varphi$ be Euler's totient function. Standard estimates for these
functions
give
\[
 q_k\ll\log(2k),
 \qquad
 \frac{\varphi(k)}{k}\gg\frac{1}{\log\log(3k)};
\]
see, for example, \cite[Chapters~1 and~2]{MontgomeryVaughan2007}. For the
first estimate, if every prime below $y$ divides $k$, then
$\log k\geq\vartheta(y)$. Consequently, the quantity in
\eqref{eq:raw-reservoir} is bounded below, uniformly for $k\leq K$, by a
positive constant times
\begin{equation}\label{eq:reservoir-lower-scale}
 \frac{n}{\log(2K)\log\log(3K)}.
\end{equation}

Since $j\leq K=n^{o(1)}$, the Brun--Titchmarsh inequality
\cite[Chapter~3]{MontgomeryVaughan2007}, applied to
$(n/(j+1),n/j]$ gives, uniformly in this range,
$m_j(n)\ll n/[j(j+1)\log n]$. Hence
\begin{equation}\label{eq:forbidden-count}
 |\mathcal F_n|
 \leq\sum_{j\leq K}(j-1)m_j(n)
 \ll\frac{n\log K}{\log n}.
\end{equation}
Now $\log K=(\log n)^\beta+o(1)$. The ratio of the right side of
\eqref{eq:forbidden-count} to \eqref{eq:reservoir-lower-scale} is
\[
 O\!\left(
  \frac{(\log K)^2\log\log(3K)}{\log n}
 \right)=o(1)
\]
because $2\beta<1$. Likewise,
$\pi(n)=O(n/\log n)$ is little-oh of
\eqref{eq:reservoir-lower-scale}. The error $O(k)$ in
\eqref{eq:raw-reservoir} is also negligible uniformly for $k\leq K$.
Removing $\mathcal F_n$ from the set counted in
\eqref{eq:raw-reservoir} proves
\eqref{eq:reservoir-large}.
\end{proof}

\begin{lemma}\label{lem:simultaneous-repair}
After applying Lemma~\ref{lem:block-rewiring} to disjoint blocks in every
class $P_k(n)$, $k\leq K$, all remaining deficient vertices can be repaired
by degree-preserving switches. The auxiliary composite edges deleted in
these switches have pairwise disjoint endpoints.
\end{lemma}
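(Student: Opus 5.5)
The plan is to pair up the remaining deficient vertices inside each even block and repair every pair by one switch through the reservoir of Lemma~\ref{lem:reservoir}. By Lemma~\ref{lem:block-rewiring}, odd blocks need no repair. An even block $B=\{p_1,\ldots,p_k\}\subseteq P_k(n)$ leaves exactly the $k$ vertices $kp_1,\ldots,kp_k$, each deficient by one. Since $k$ is even, I pair them as $(kp_1,kp_2),(kp_3,kp_4),\ldots$. For a pair $(x,y)$, I choose two vertices $a,b\in\mathcal S_k(n)$, delete the edge $\{a,b\}$, and add the edges $\{x,a\}$ and $\{y,b\}$. Then $x$ and $y$ each gain one edge, while $a$ and $b$ each lose one edge and gain one edge, so all degrees are restored.

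Next I check that each switch is legitimate. First, $\{a,b\}$ is an edge of $G_n$, because both vertices are multiples of $q_k$. It is still present in the current graph: the block operations only touch edges with an endpoint in $\mathcal F_n$ or in a block, and $a,b\notin\mathcal F_n$ are composite. It has also not been deleted by an earlier switch, because I will insist that all chosen reservoir endpoints are new. Second, the added edge $\{x,a\}$ is not already present and is not a loop. We have $x=kp_i\in\mathcal F_n$ and $a\notin\mathcal F_n$, so $x\neq a$. By Lemma~\ref{lem:reservoir}, $a$ is coprime to $k$ and to $p_i$, so $\gcd(x,a)=1$ and $\{x,a\}\notin E(G_n)$. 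Nor was $\{x,a\}$ added by Lemma~\ref{lem:block-rewiring}, since those edges join two vertices of $\mathcal F_n$ or two primes. It was not added by another switch either, since each deficient $x$ and each reservoir vertex $a$ is used in only one switch. The same reasoning applies to $\{y,b\}$.

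The main point is choosing the reservoir vertices so that all deleted edges have pairwise disjoint endpoints across all classes $k$ simultaneously. The reservoirs $\mathcal S_k(n)$ for different $k$ may overlap, for example when $q_k$ coincides, so I proceed greedily over all pairs in all even blocks in some fixed order. Each deficient vertex $kp_i$ corresponds to a distinct prime, so the total number of pairs is at most $\pi(n)/2$. Hence at any stage at most $\pi(n)$ reservoir vertices have been used. By \eqref{eq:reservoir-large}, at least $|\mathcal S_k(n)|-\pi(n)>\pi(n)+2\geq2$ unused vertices of $\mathcal S_k(n)$ remain, so a fresh pair $a,b$ can always be selected. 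This gives pairwise disjoint endpoints for the deleted auxiliary edges, which is exactly the second assertion.

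Finally, I need that the switches do not interfere with each other or with the block edges. The reservoir vertices lie outside $\mathcal F_n$, and primes are never reservoir vertices, so each switch only touches its own fresh reservoir vertices and its own deficient vertices. The result is a simple graph on $\{2,\ldots,n\}$ with every degree equal to its degree in $G_n$. I expect the disjointness bookkeeping across overlapping reservoirs to be the only delicate step, and the uniform bound \eqref{eq:reservoir-large} is exactly what makes the greedy choice succeed.
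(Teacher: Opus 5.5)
Your proposal is correct and follows the paper's proof essentially step for step: you pair the multiplier-$k$ endpoints within each even block, greedily choose fresh vertices $a,b\in\mathcal S_k(n)$ across all classes using \eqref{eq:reservoir-large}, delete $\{a,b\}$, and add the two coprime edges. Your check that $\{a,b\}$ is still present and that the added edges are new is slightly more explicit than the paper's, since you note that block operations only touch edges with an endpoint in $\mathcal F_n$ or in a block.
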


\begin{proof}
Only even classes leave deficient vertices. Within each even block, pair the
vertices $kp_1,\ldots,kp_k$ arbitrarily. Across all classes there are at most
$\pi(n)$ deficient vertices and hence at most $\pi(n)/2$ pairs.

Consider a pair $a=kp_i$ and $b=kp_j$. By
Lemma~\ref{lem:reservoir}, fewer than $\pi(n)$ auxiliary vertices have been
used before any stage. Thus, even after excluding all of them, there are two
distinct unused vertices
$c,d\in\mathcal S_k(n)$. They have the common least prime factor $q_k$, so
$cd$ is an edge. On the other hand,
\[
 \gcd(a,c)=\gcd(b,d)=1:
\]
the auxiliary vertices are coprime to $k$ and to all primes in $P_k(n)$.
Thus $ac$ and $bd$ are absent from the original graph. They also have not
been introduced by an earlier switch, because $a,b,c,d$ have not previously
served as endpoints of an auxiliary repair and $c,d\notin\mathcal F_n$.
Delete $cd$ and add $ac$ and $bd$. The deficient vertices $a,b$ each gain
one incident edge, while $c,d$ each lose one and gain one. All four degrees
are therefore restored. Choosing new auxiliary vertices at each step makes
the deleted edges endpoint-disjoint.
\end{proof}

The construction will be used globally, so we record its compatibility
properties explicitly. First, multiplier endpoints belonging to distinct
processed primes are distinct. Second, every edge inserted by a local
prime-block operation joins vertices that were nonadjacent in $G_n$. Third,
auxiliary
vertices are chosen outside $\mathcal F_n$ and are used in at most one
repair, so no auxiliary deletion or insertion repeats an earlier operation.
Finally, each local block preserves the degrees of its prime vertices, and
each auxiliary repair restores two remaining deficiencies while leaving the
degrees of its auxiliary endpoints unchanged. Thus performing all switches
produces a simple graph and preserves every labeled vertex degree.

\subsection{The global realization and upper bound}

\begin{lemma}\label{lem:realization}
For all sufficiently large $n$, there is a graph $H_n$ on
$\{2,\ldots,n\}$ satisfying $d_{H_n}(v)=d_{G_n}(v)$ for every vertex $v$ and
\begin{equation}\label{eq:realization-bound}
 \alpha(H_n)\leq
 \sum_{k=1}^{K}\left\lfloor\frac{m_k(n)}{k}\right\rfloor
 +\pi\!\left(\frac{n}{K+1}\right)
 +2\pi(\sqrt n)+K^2.
\end{equation}
\end{lemma}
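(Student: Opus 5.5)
We build $H_n$ from $G_n$ by processing blocks and then bound its independence number by covering the vertex set with a few cliques plus a small leftover. For each $k\le K$, we partition $P_k(n)$ into $\lfloor m_k(n)/k\rfloor$ disjoint blocks of size $k$, leaving fewer than $k$ unprocessed primes in that class. Lemma~\ref{lem:block-rewiring} is applied to every block, and then Lemma~\ref{lem:simultaneous-repair} repairs all remaining deficiencies. By the compatibility remarks following that lemma, the result is a simple graph $H_n$ on $\{2,\ldots,n\}$ with $d_{H_n}(v)=d_{G_n}(v)$ for every $v$. This uses only that $n$ is large enough for Lemma~\ref{lem:reservoir} and for every prime in $P_k(n)$, $k\le K$, to exceed $K$.

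To bound $\alpha(H_n)$, fix an independent set $I$ of $H_n$ and split $V$ into four parts.

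\textbf{Processed primes.} Each block is a clique $K_k$ in $H_n$, so $I$ meets it in at most one vertex. This contributes at most $\sum_{k\le K}\lfloor m_k(n)/k\rfloor$.

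\textbf{Unprocessed small-class primes.} There are fewer than $\sum_{k\le K}k\le K^2$ of these.

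\textbf{Primes with $k>K$.} These are primes $p>n/(K+1)$, at most $\pi(n/(K+1))$ of them.

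\textbf{Composites.} Group them by least prime factor $q\le\sqrt n$; there are $\pi(\sqrt n)$ classes, each a clique in $G_n$. We will show that each class in $H_n$ is still a clique minus a matching. An independent set meets a clique minus a matching in at most two vertices, which gives $2\pi(\sqrt n)$.

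The main obstacle is justifying this last claim. We must check exactly which edges inside a least-prime-factor class get deleted. Lemma~\ref{lem:block-rewiring} deletes only prime–composite edges $\{p_i,rp_i\}$, so it removes no composite–composite edge. The repairs of Lemma~\ref{lem:simultaneous-repair} delete edges $cd$ with $c,d\in\mathcal S_k(n)$, which share least prime factor $q_k$. Each auxiliary vertex is used in at most one repair, so these deletions are pairwise endpoint-disjoint. Across all $k$ together they therefore form a matching inside the class of least prime factor $q_k$. All other operations only add edges, and adding edges cannot hurt the clique-minus-matching bound.

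In a clique minus a matching, any three vertices include two that are not matched to each other and hence adjacent. So $|I\cap C|\le2$ for each such class $C$. Summing the four contributions gives \eqref{eq:realization-bound}; the term $K^2$ generously absorbs the leftover small-class primes.
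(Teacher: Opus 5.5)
Your proposal is correct and follows the paper's proof essentially step for step. You build $H_n$ by applying Lemma~\ref{lem:block-rewiring} to blocks and Lemma~\ref{lem:simultaneous-repair} to the leftover deficiencies, then bound $\alpha(H_n)$ over the same four-part partition, with least-prime-factor composite classes remaining cliques minus a matching; the only detail the paper makes explicit that you leave to the compatibility remarks is that multiplier endpoints of distinct processed primes never coincide, since $rp=sq$ with $p\neq q$ and $r,s\leq K<p$ would force $p\mid s$.
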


\begin{proof}
For each $k\leq K$, partition all but fewer than $k$ primes in $P_k(n)$ into
blocks of cardinality $k$. The multiplier endpoints belonging to distinct
processed primes are distinct: if $rp=sq$ with distinct processed primes
$p,q>K$ and $1\leq r,s\leq K$, then $p$ divides $s$, which is impossible.
Thus the local prime-block operations are mutually compatible. Apply
Lemma~\ref{lem:block-rewiring} to each block and then apply the simultaneous
repairs of
Lemma~\ref{lem:simultaneous-repair}. The compatibility properties recorded
above show that the resulting graph $H_n$ is simple and has
$d_{H_n}(v)=d_{G_n}(v)$ for every labeled vertex $v$.

Let $\cC_n$ denote the composite vertices. In $G_n$, the composite vertices
with a fixed least prime factor $q\leq\sqrt n$ form a clique. The only
composite-to-composite edges deleted during the construction are the
auxiliary edges $cd$. Their endpoints are globally distinct, so the deleted
edges within each least-prime-factor class form a matching. Thus each such
class induces a clique with at most a matching removed and has independence
number at most two. Added edges cannot increase an independence number;
hence
\begin{equation}\label{eq:composite-alpha}
 \alpha(H_n[\cC_n])\leq2\pi(\sqrt n).
\end{equation}

Every full prime block is a clique and contributes at most one vertex to an
independent set. There are $\lfloor m_k(n)/k\rfloor$ blocks in class $k$.
The unblocked remainders from the first $K$ classes contain fewer than
$\sum_{k\leq K}k\leq K^2$ primes, and every prime in a class with index
greater than $K$ is at most $n/(K+1)$. Adding these bounds to
\eqref{eq:composite-alpha} proves \eqref{eq:realization-bound}.
\end{proof}

\begin{proof}[Proof of Theorem~\ref{thm:main}]
The residue is determined by the degree sequence, so
$\R(H_n)=\R(G_n)$. Applying \eqref{eq:fms-chain} to the realization in
Lemma~\ref{lem:realization} gives
\[
 \R(G_n)\leq\alpha(H_n).
\]
The first term on the right side of \eqref{eq:realization-bound} is at most
$\cP(n)$. The Prime Number Theorem and the definition of $K$ give
\begin{align*}
 \pi\!\left(\frac{n}{K+1}\right)
 &=O_\beta\!\left(n\exp\{-(\log n)^\beta\}\right),\\
 \pi(\sqrt n)+K^2
 &=O_\beta\!\left(n\exp\{-(\log n)^\beta\}\right).
\end{align*}
Therefore
\begin{equation}\label{eq:upper-prime-mass}
 \R(G_n)\leq\cP(n)
 +O_\beta\!\left(n\exp\{-(\log n)^\beta\}\right).
\end{equation}
On the other hand, \eqref{eq:fms-chain} implies
$\R(G_n)\geq\CW(G_n)\geq\cP(n)$. This proves
\eqref{eq:stretched-prime-mass}. Proposition
\ref{prop:prime-mass-exact} then proves \eqref{eq:complete-expansion}, since
$n\exp\{-(\log n)^\beta\}=O_{\beta,M}(n/\log^{M+1}n)$ for every fixed $M$.
Taking $M=2$ gives \eqref{eq:main-expansion}.
Finally,
\(
 \pi(n)=n/\log n+n/\log^2n+O(n/\log^3n),
\)
and division gives \eqref{eq:ratio-expansion}.
\end{proof}

Theorem~\ref{thm:main} gives, in particular, the exact limit proposed by
Staton.

\begin{corollary}\label{cor:historical-limit}
As $n\to\infty$,
\[
 \frac{\R(G_n)\log n}{n}\longrightarrow\zeta(2)-1,
 \qquad
 \frac{\pi(n)}{\R(G_n)}\longrightarrow
 \frac{1}{\zeta(2)-1}=1.550546\ldots.
\]
\end{corollary}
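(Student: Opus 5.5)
The corollary follows directly from the second-order expansion \eqref{eq:main-expansion} of Theorem~\ref{thm:main}, together with the Prime Number Theorem.

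For the first limit, multiply \eqref{eq:main-expansion} by $\log n/n$. This gives
\[
 \frac{\R(G_n)\log n}{n}=c_0+\frac{c_0-A}{\log n}+O\!\left(\frac{1}{\log^2 n}\right).
\]
The constant $A=\sum_{k\ge2}(\log k)/(k^2(k-1))$ is finite, since its terms are $O(\log k/k^3)$. Hence $c_0-A$ is a fixed constant, the correction terms tend to zero, and the limit is $c_0=\zeta(2)-1$. Only the leading term is actually needed. We could equally use the sandwich $\cP(n)\le\R(G_n)\le\cP(n)+O_\beta(n\exp\{-(\log n)^\beta\})$ from \eqref{eq:stretched-prime-mass}, together with the leading term of \eqref{eq:prime-mass-asymptotic}.

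For the second limit, we invert \eqref{eq:ratio-expansion}. That estimate gives $\R(G_n)/\pi(n)\to c_0$. Since $c_0=\pi^2/6-1>0$, the reciprocal converges to $1/c_0$. Alternatively, divide the Prime Number Theorem, $\pi(n)\log n/n\to1$, by the first limit.

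It remains to justify the decimal value. We have $\pi^2/6=1.6449340668\ldots$, so $c_0=0.6449340668\ldots$. Direct division gives $1/c_0=1.550546\ldots$, and known bounds on $\pi^2$ to about ten digits suffice to certify the six displayed digits.

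No step here is a genuine obstacle. The only point needing a line of care is that $c_0\neq0$, so that the reciprocal limit is legitimate. All of the analytic content is already contained in Theorem~\ref{thm:main}.
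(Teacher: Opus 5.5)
Your proposal is correct and matches the paper, which states this corollary as an immediate consequence of Theorem~\ref{thm:main}. The paper likewise reads the first limit off the leading term of \eqref{eq:main-expansion} and gets the second from \eqref{eq:ratio-expansion}, or equivalently from the Prime Number Theorem, using $c_0=\zeta(2)-1>0$ to justify taking the reciprocal.
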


\begin{figure}[htbp]
\centering
\includegraphics[width=\textwidth]{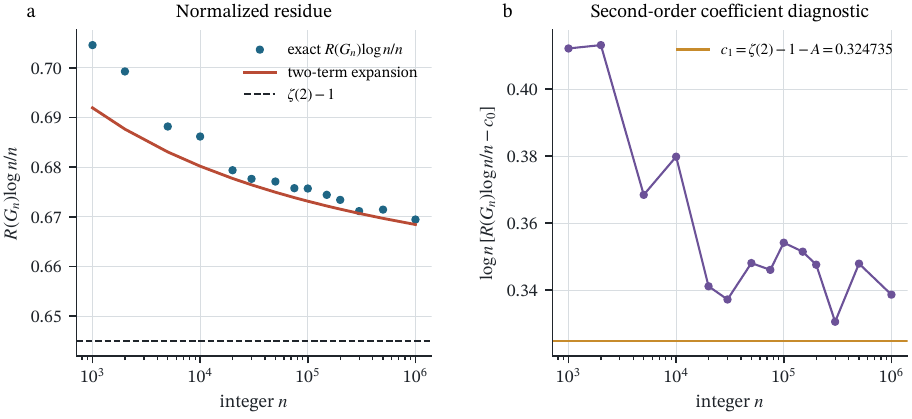}
\caption{Convergence to the zeta constant and its second-order correction.
\textup{(a)} Exact values of $\R(G_n)\log n/n$ approach
$\zeta(2)-1$. The theorem explains both the limiting constant and the visible
drift.
\textup{(b)} Subtracting the leading term and multiplying by $\log n$ extracts
the second coefficient $c_1=\zeta(2)-1-A$. These values are not used in the
proof; they show why the computation directed attention from the prime-counting
function itself to the degree classes indexed by $\lfloor n/p\rfloor$.}
\label{fig:asymptotic}
\end{figure}

\section{A local defect identity for Havel--Hakimi reductions}

The proof above determines the residue to every fixed asymptotic order
without following its individual reduction steps. Exact computations suggest
that the residue is in fact confined to a bounded interval above the
Caro--Wei sum. We now turn to the reduction dynamics themselves, first
deriving a local identity valid for every graphic sequence and then
separating each trajectory into the initial reductions preceding the first
unit-band step and the terminal reductions that follow it.

\subsection{An exact local-defect identity}

A finite sequence of nonnegative integers is \emph{graphic} if it is the
degree sequence of a finite simple graph. Let
$\mathbf d=(d_1,\ldots,d_t)$ be a nonincreasing graphic sequence with
$d_1=D>0$. Its Havel--Hakimi reduction $\mathbf d'$ is obtained by deleting
$D$ and subtracting one from $d_2,\ldots,d_{D+1}$, followed by reordering.
We call $D$ the \emph{pivot} of the step. Graphicality guarantees that these
$D$ selected entries are positive. Define
\(
 \CW(\mathbf d)=\sum_i1/(d_i+1).
\)
Repeated reduction produces a finite trajectory
$\mathbf d^{(0)},\ldots,\mathbf d^{(s)}$ ending in an all-zero sequence; its
number of remaining zeros is denoted $\R(\mathbf d)$. When $\mathbf d$ is
the degree sequence of $G$, these definitions agree with $\CW(G)$ and
$\R(G)$. We call $\R(\mathbf d)-\CW(\mathbf d)$ the
\emph{Caro--Wei defect} of $\mathbf d$; Lemma~\ref{lem:defect-step} expresses
it as a sum of nonnegative increments.

\begin{lemma}\label{lem:defect-step}
With the preceding notation,
\begin{equation}\label{eq:step-defect}
 \CW(\mathbf d')-\CW(\mathbf d)
 =\sum_{i=2}^{D+1}\frac{1}{d_i(d_i+1)}-\frac{1}{D+1}
 \geq0.
\end{equation}
Moreover, the same quantity has the exact heterogeneity representation
\begin{equation}\label{eq:heterogeneity-charge}
 \CW(\mathbf d')-\CW(\mathbf d)
 =\sum_{i=2}^{D+1}
 \frac{(D-d_i)(D+d_i+1)}
 {d_i(d_i+1)D(D+1)}.
\end{equation}
Consequently, if
$\mathbf d^{(0)},\ldots,\mathbf d^{(s)}$ is the complete Havel--Hakimi
trajectory and $\mathbf d^{(s)}$ consists of $\R(\mathbf d)$ zeros, then
\begin{equation}\label{eq:defect-sum}
 \R(\mathbf d)-\CW(\mathbf d)
 =\sum_{j=0}^{s-1}
 \left(
  \sum_{i=2}^{D_j+1}
   \frac{1}{d_i^{(j)}(d_i^{(j)}+1)}
  -\frac{1}{D_j+1}
 \right),
\end{equation}
where $D_j=d_1^{(j)}$.
\end{lemma}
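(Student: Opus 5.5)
The plan is a direct computation of the change in $\CW$ under one reduction step, followed by a telescoping sum; reordering never affects $\CW$, so it can be ignored throughout.

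\textbf{Step 1: the increment formula.} Passing from $\mathbf d$ to $\mathbf d'$, the entry $D$ disappears, which removes $1/(D+1)$ from the sum. Each selected entry $d_i$ with $2\le i\le D+1$ becomes $d_i-1$, and the remaining entries are untouched. Graphicality gives $d_i\ge1$, so
\[
 \frac{1}{d_i}-\frac{1}{d_i+1}=\frac{1}{d_i(d_i+1)}
\]
is well defined. Summing these contributions yields \eqref{eq:step-defect} as an identity.

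\textbf{Step 2: the heterogeneity representation.} The selected entries number exactly $D$, so the pivot term can be spread over them:
\[
 \frac{1}{D+1}=\sum_{i=2}^{D+1}\frac{1}{D(D+1)}.
\]
This lets me compare term by term. For each $i$,
\[
 \frac{1}{d_i(d_i+1)}-\frac{1}{D(D+1)}
 =\frac{D(D+1)-d_i(d_i+1)}{d_i(d_i+1)D(D+1)}.
\]
The numerator factors as $D^2-d_i^2+D-d_i=(D-d_i)(D+d_i+1)$, which is \eqref{eq:heterogeneity-charge}.

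\textbf{Step 3: nonnegativity.} The sequence is nonincreasing, so $d_i\le D$ for every selected $i$, and also $d_i\ge1$. Hence every summand in \eqref{eq:heterogeneity-charge} is nonnegative, and so is the increment.

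\textbf{Step 4: telescoping.} Applying the identity to each step $j=0,\dots,s-1$ of the trajectory and summing gives
\[
 \CW(\mathbf d^{(s)})-\CW(\mathbf d^{(0)})=\sum_{j=0}^{s-1}\bigl(\text{increment at step }j\bigr).
\]
The final sequence $\mathbf d^{(s)}$ consists of $\R(\mathbf d)$ zeros, each contributing $1$, so $\CW(\mathbf d^{(s)})=\R(\mathbf d)$. This gives \eqref{eq:defect-sum}.

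The only points needing care are that graphicality is preserved along the trajectory, which is the Havel--Hakimi theorem, so the process continues and each selected entry is positive. There is no real obstacle; the factorization in Step 2 is the one mildly nontrivial computation.
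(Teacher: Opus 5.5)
Your proposal is correct and follows the paper's proof: the same termwise increment $1/d_i-1/(d_i+1)$, the same splitting of $1/(D+1)$ into $D$ copies of $1/[D(D+1)]$ with the factorization $(D-d_i)(D+d_i+1)$, and the same telescoping, using $\CW(\mathbf d^{(s)})=\R(\mathbf d)$. The only cosmetic difference is that you derive nonnegativity from the heterogeneity form, whereas the paper bounds each of the $D$ terms below by $1/[D(D+1)]$ directly; this is the same inequality.
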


\begin{proof}
Deleting $D$ removes the term $1/(D+1)$. Each of the next $D$ entries is
replaced by $d_i-1$, changing its contribution by
\(
 1/d_i-1/(d_i+1)=1/[d_i(d_i+1)].
\)
This proves the identity. Since $d_i\leq D$ for $2\leq i\leq D+1$, the sum
of the $D$ positive terms is at least $D/[D(D+1)]=1/(D+1)$.
Writing $1/(D+1)$ as the sum of $D$ copies of $1/[D(D+1)]$
and factoring
\[
 D(D+1)-d_i(d_i+1)=(D-d_i)(D+d_i+1)
\]
gives \eqref{eq:heterogeneity-charge}.
Telescoping over the trajectory proves \eqref{eq:defect-sum}.
\end{proof}

Formula \eqref{eq:heterogeneity-charge} shows that the defect is not caused
by the size of a pivot alone. It measures how far the degrees selected by
that pivot fall below the pivot degree. For every graphic sequence, it
therefore separates the defect contributed before the first unit-band step
from the defect contributed by all subsequent steps.

\subsection{Persistence of unit-band steps}

Call a Havel--Hakimi step with pivot $D$ a \emph{unit-band step} if every
selected degree belongs to $\{D,D-1\}$. The condition can be recognized
directly from the current degree multiplicities: if $c_j$ denotes the number
of entries equal to $j$, then the step is a unit-band step precisely when
\begin{equation}\label{eq:unit-band-capacity}
 c_D+c_{D-1}-1\geq D.
\end{equation}

\begin{lemma}\label{lem:unit-band-absorption}
If a Havel--Hakimi trajectory reaches a unit-band step, then every later
step with positive pivot is also a unit-band step.
\end{lemma}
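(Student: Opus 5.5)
The plan is to prove a one-step persistence statement and then induct along the trajectory. Concretely, we will show: \emph{if the step with pivot $D$ applied to $\mathbf d$ is a unit-band step, and the reduced sequence $\mathbf d'$ has positive maximum $D'$, then the step applied to $\mathbf d'$ is again a unit-band step.} Since the condition is checked at every later step in turn, induction on the step index then gives Lemma~\ref{lem:unit-band-absorption}. Throughout we use the multiplicity criterion \eqref{eq:unit-band-capacity}. Since the Havel--Hakimi step depends only on the multiset, ties among equal entries can be resolved arbitrarily.

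\emph{Setup.} Let $c_j$ be the multiplicities in $\mathbf d$. Put $a=c_D-1$ (the copies of $D$ other than the pivot) and $b=c_{D-1}$. The hypothesis is $a+b\geq D$. The $D$ selected entries consist of $\min(a,D)$ copies of $D$ followed by $D-\min(a,D)$ copies of $D-1$. Every unselected entry other than the remaining copies of $D$ and $D-1$ is at most $D-2$ and is unchanged. After the step, the old top block of $a+b$ entries becomes a block of values in $\{D,D-1,D-2\}$, and every other entry is at most $D-2$. The new maximum $D'$ is therefore $D$, $D-1$, or $D-2$, and we split into these three cases.

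\emph{Case analysis.}
\begin{itemize}
\item[(i)] $D'=D$. This happens exactly when $a>D$. The $a-D$ unselected copies of $D$ remain, and the $D$ selected ones join the $b$ copies of $D-1$. Hence $c'_{D}+c'_{D-1}-1=a+b-1\geq a-1\geq D$.
\item[(ii)] $D'=D-1$. Now no entry equals $D$, and all $a+b$ entries of the old top block lie in $\{D-1,D-2\}$. Hence $c'_{D-1}+c'_{D-2}-1\geq a+b-1\geq D-1=D'$.
\item[(iii)] $D'=D-2$. This forces $a=0$ and $b=D$, so all $D$ selected entries drop to $D-2$. Hence $c'_{D-2}\geq D\geq D'+1$.
\end{itemize}
In each case \eqref{eq:unit-band-capacity} holds for $\mathbf d'$ with pivot $D'$, provided $D'>0$. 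If $D'=0$, the trajectory has ended and there is nothing to prove.

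\emph{Main obstacle.} The only delicate point is the bookkeeping in case (i): one must check that the required count, $D+1$ entries in $\{D,D-1\}$ including the new pivot, is met, and this is where the strict inequality $a>D$ is used. The other cases follow from the observation that the old top block, of size at least $D$, falls at most one level and stays inside the new unit band. I would also verify explicitly that the sorted reduced sequence is again graphic, so that the criterion and the definition of a step apply. This follows from the Havel--Hakimi theorem.
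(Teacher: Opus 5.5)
Your proof is correct and takes essentially the same route as the paper: a one-step persistence argument that splits on whether the new maximum is still $D$, followed by induction along the trajectory. The only difference is bookkeeping. You split the paper's case $D'\leq D-1$ into $D'=D-1$ and $D'=D-2$, and you count the whole old top block through the multiplicity criterion \eqref{eq:unit-band-capacity}. The paper instead notes that at least $D-1\geq D'$ of the just-decremented entries survive, each with degree at least $D-2\geq D'-1$.
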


\begin{proof}
At a unit-band step, each of the $D$ selected entries has degree $D$ or
$D-1$ and therefore becomes $D-1$ or $D-2$. Suppose first that the next
maximum is still $D$. An unselected degree-$D$ entry must then remain.
Because the preceding step selected the $D$ largest entries, all of its
selected entries had degree $D$; after reduction they provide $D$ entries of
degree $D-1$. The next selected block consequently uses only degrees $D$ and
$D-1$.

Otherwise, let $D'\leq D-1$ be the next positive pivot. At most one of the
$D$ entries just decremented is removed as that pivot. Hence at least $D-1$
of them remain, and every one has degree at least $D-2\geq D'-1$. Since
$D-1\geq D'$, there are at least $D'$ available entries of degree at least
$D'-1$; none has degree greater than the current maximum $D'$. The next
selected block therefore uses only degrees $D'$ and $D'-1$. Repeating the
argument proves the claim.
\end{proof}

By the \emph{terminal unit-band suffix} we mean the part of the trajectory
beginning with its first unit-band step and ending with the all-zero sequence.
Lemma~\ref{lem:unit-band-absorption} shows that every positive-pivot step in
this suffix is again a unit-band step.

\begin{theorem}\label{thm:terminal-band}
If the maximal terminal unit-band suffix of a graphic Havel--Hakimi
trajectory is nonempty and begins with pivot $D_*$, then its total defect is
at most
\begin{equation}\label{eq:finite-terminal-bound}
 \frac32-\frac1{D_*}-\frac1{D_*+1}<\frac32.
\end{equation}
If the suffix is empty, its total defect is zero. In particular, every
terminal unit-band suffix contributes at most $3/2$.
\end{theorem}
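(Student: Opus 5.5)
The plan is to compute the defect of a single unit-band step exactly using \eqref{eq:heterogeneity-charge}, and then to show that each pivot value contributes at most once. The resulting per-pivot bounds telescope to \eqref{eq:finite-terminal-bound}.

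\emph{Step 1 (charge of one unit-band step).} Consider a unit-band step with pivot $D$. In \eqref{eq:heterogeneity-charge}, a selected entry with $d_i=D$ contributes $0$. A selected entry with $d_i=D-1$ contributes
\[
 \frac{1\cdot 2D}{(D-1)D\cdot D(D+1)}=\frac{2}{(D-1)D(D+1)}.
\]
Let $b$ be the number of selected entries of degree $D-1$. The step defect is then exactly $2b/[(D-1)D(D+1)]$, and since $b\leq D$ it is at most $2/[(D-1)(D+1)]$. When $D=1$, graphicality forces every selected entry to be positive, so all selected degrees equal $1$ and the defect is $0$. I also note that an empty suffix contributes nothing, which is the trivial case of the theorem.

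\emph{Step 2 (at most one charged step per pivot value).} Pivots are nonincreasing along any Havel--Hakimi trajectory, and by Lemma~\ref{lem:unit-band-absorption} every positive-pivot step of the suffix is a unit-band step with pivot at most $D_*$. Suppose a step with pivot $D$ selects at least one entry of degree $D-1$. Because the step selects the $D$ largest remaining entries, every other entry of degree $D$ must have been selected, and each of these drops to $D-1$. Hence after this step no entry of degree $D$ remains, and the next pivot is at most $D-1$. So for each value $D\geq2$, at most one step with pivot $D$ has positive defect. Every other step with pivot $D$ selects only degree-$D$ entries and contributes $0$.

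\emph{Step 3 (telescoping).} Summing the bound from Step 1 over the possible pivot values $2\leq D\leq D_*$ gives
\[
 \sum_{D=2}^{D_*}\frac{2}{(D-1)(D+1)}
 =\sum_{D=2}^{D_*}\Bigl(\frac1{D-1}-\frac1{D+1}\Bigr)
 =\frac32-\frac1{D_*}-\frac1{D_*+1},
\]
which is \eqref{eq:finite-terminal-bound}. The step I expect to need the most care is Step 2. There I must verify that an unselected degree-$D$ entry cannot survive once a $(D-1)$ entry has been selected, which follows from the sorted selection rule. I must also check that Lemma~\ref{lem:unit-band-absorption} really keeps every later step in the band, so that the only nonzero charges are the $(D-1)$-entries counted in Step 1.
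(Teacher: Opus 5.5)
Your proposal is correct and follows essentially the same route as the paper. Both arguments use the exact unit-band charge $2s/[(D-1)D(D+1)]$ (you read it off \eqref{eq:heterogeneity-charge}, the paper off \eqref{eq:step-defect}), observe that selecting a degree-$(D-1)$ entry forces every other degree-$D$ entry into the block so at most one step per pivot value is charged, and then telescope over $2\leq D\leq D_*$; your explicit remarks that pivots are nonincreasing and that pivot-$1$ steps carry zero charge by graphicality are points the paper uses implicitly.
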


\begin{proof}
Consider a unit-band step with pivot $D\geq2$, and let $s_D$ be the number
of selected entries of degree $D-1$. Substitution into
\eqref{eq:step-defect} gives
\begin{equation}\label{eq:unit-band-charge}
 \delta_D=\frac{2s_D}{D(D^2-1)}.
\end{equation}
For a fixed pivot value $D$, every step except possibly the last one selects
only degree-$D$ entries and therefore contributes zero to the defect. At the
last such step, $0\leq s_D\leq D$; if $s_D>0$, all remaining degree-$D$
entries are selected and no later pivot can equal $D$. Thus the total
positive defect contribution at pivot value $D$ is at most $2/(D^2-1)$. No
pivot in the suffix exceeds
$D_*$, the pivot-$1$ contribution is zero, and the finite telescoping sum
\[
 \sum_{D=2}^{D_*}\frac{2}{D^2-1}
 =\sum_{D=2}^{D_*}\left(\frac{1}{D-1}-\frac{1}{D+1}\right)
 =\frac32-\frac1{D_*}-\frac1{D_*+1}
\]
proves \eqref{eq:finite-terminal-bound}. The empty-suffix assertion is by
definition.
\end{proof}

\subsection{The initial segment and an initial-degree upper bound}

It remains to control the prefix before the first unit-band step. The next
lemma replaces the moving lower edge of each selected block by an order
statistic of the initial degree sequence.

\begin{lemma}\label{lem:static-quantile}
Let
$d_1^{(0)}\geq\cdots\geq d_N^{(0)}$ be a graphic sequence. Immediately
before reduction $j$, let $D_j$ be the pivot and let $\mu_j$ be the least of
the $D_j$ selected degrees. Then
\begin{equation}\label{eq:static-quantile}
 \mu_j\geq d_{D_j+j+1}^{(0)}-j.
\end{equation}
\end{lemma}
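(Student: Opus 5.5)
The plan is to prove a slightly stronger order-statistic statement by induction on $j$ and then specialise it. Write $d^{(j)}_1\geq d^{(j)}_2\geq\cdots\geq d^{(j)}_{N-j}$ for the sorted sequence immediately before reduction $j$. This sequence has exactly $N-j$ entries, because each reduction deletes one entry. Since the $D_j$ selected entries occupy positions $2,\ldots,D_j+1$, the least selected degree is
\[
\mu_j=d^{(j)}_{D_j+1}.
\]
Hence it suffices to show that, for every $j$ and every $m$ with $1\leq m\leq N-j$,
\begin{equation*}
 d^{(j)}_m\;\geq\; d^{(0)}_{m+j}-j .
\end{equation*}
Taking $m=D_j+1$ then gives \eqref{eq:static-quantile}. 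The index is legitimate: graphicality forces $D_j+1\leq N-j$, so $D_j+j+1\leq N$.

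The key single-step fact is the following. If $\mathbf d'$ arises from $\mathbf d$ by deleting one entry and decreasing each remaining entry by at most one, then $d'_m\geq d_{m+1}-1$ for every admissible $m$. To see this, look at the $m+1$ largest entries of $\mathbf d$. At most one of them is the deleted pivot, so at least $m$ survive into $\mathbf d'$. Each survivor has value at least $d_{m+1}-1$, so $\mathbf d'$ has at least $m$ entries of size at least $d_{m+1}-1$. Its $m$th largest entry is therefore at least $d_{m+1}-1$. A Havel--Hakimi step has exactly this form: the pivot is deleted, $D$ entries drop by one, and the rest are unchanged. Reordering affects only the labels, not the multiset, so the argument is insensitive to it.

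The induction is then immediate. The case $j=0$ is trivial. For the inductive step,
\[
d^{(j)}_m\geq d^{(j-1)}_{m+1}-1\geq d^{(0)}_{m+j}-(j-1)-1 .
\]
The only point needing care is the index bookkeeping. We must check that $m+1\leq N-(j-1)$ whenever $m\leq N-j$, so that the inductive hypothesis applies at step $j-1$; this holds. We must also confirm that $\mu_j$ is exactly the $(D_j+1)$st order statistic, including the case of ties. Neither is a real obstacle, and the main step is the single-step counting observation above.
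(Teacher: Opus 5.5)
Your proof is correct and is essentially the paper's argument. The paper tracks the first $D_j+j+1$ initial entries directly: at most $j$ have been deleted and each survivor has been decremented at most $j$ times, so at least $D_j$ non-pivot survivors have degree at least $d^{(0)}_{D_j+j+1}-j$. You unroll that same count one reduction at a time through the order-statistic inequality $d^{(j)}_m\geq d^{(0)}_{m+j}-j$, which has the minor advantage of never needing to follow individual entries through reorderings with ties.
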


\begin{proof}
Among the first $D_j+j+1$ vertices in the initial ordering, at most $j$ have
been removed. After excluding the current pivot, at least $D_j$ remain.
Every surviving degree has been decreased at most once in each preceding
reduction, so each of these vertices currently has degree at least
$d_{D_j+j+1}^{(0)}-j$. Havel--Hakimi selects the $D_j$ largest remaining
degrees, which proves the assertion.
\end{proof}

For $n\geq4$, let $\delta_j$ denote the $j$th summand in
\eqref{eq:defect-sum}. Let $\tau_n$ be the index of the first unit-band step
for $G_n$, with the convention that $\tau_n=s$ if no such step occurs, and
write
\[
 B_n=\sum_{j<\tau_n}\delta_j,
 \qquad
 T_n=\sum_{j\geq\tau_n}\delta_j.
\]
Thus $\R(G_n)-\CW(G_n)=B_n+T_n$, and
Theorem~\ref{thm:terminal-band} gives $0\leq T_n<3/2$ when the suffix is
nonempty and $T_n=0$ otherwise. Put
\begin{equation}\label{eq:static-envelope}
 L_j=\max\{1,d_{D_j+j+1}^{(0)}-j\},
 \qquad
 E_n=\sum_{j<\tau_n}\frac{D_j}{L_j(L_j+1)}.
\end{equation}
The quantity $E_n$ depends on the initial degree order statistics and the
pivots before $\tau_n$; it is the explicit initial-degree upper bound used
below.

\begin{corollary}\label{cor:static-envelope}
For every $n\geq4$,
\begin{equation}\label{eq:two-phase-bound}
 0\leq \R(G_n)-\CW(G_n)\leq E_n+\frac32.
\end{equation}
Consequently, a uniform bound on $E_n$ proves the bounded-defect property
$\R(G_n)-\CW(G_n)=O(1)$.
\end{corollary}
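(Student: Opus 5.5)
The proof combines the exact defect decomposition with the bounds already established for each phase. The lower bound $0\leq\R(G_n)-\CW(G_n)$ is immediate from \eqref{eq:fms-chain}. Alternatively, it follows from \eqref{eq:defect-sum}, since every summand $\delta_j$ is nonnegative by Lemma~\ref{lem:defect-step}. For the upper bound, we write $\R(G_n)-\CW(G_n)=B_n+T_n$ as in the definition preceding \eqref{eq:static-envelope}. Theorem~\ref{thm:terminal-band} gives $T_n\leq 3/2$ in every case: the bound is strictly less than $3/2$ when the suffix is nonempty, and $T_n=0$ when it is empty. So the whole task reduces to showing $B_n\leq E_n$, which we do term by term.

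Fix a step $j<\tau_n$ with pivot $D_j$ and selected degrees $d_2^{(j)},\ldots,d_{D_j+1}^{(j)}$. By \eqref{eq:step-defect}, we have
\[
 \delta_j\leq\sum_{i=2}^{D_j+1}\frac{1}{d_i^{(j)}(d_i^{(j)}+1)}.
\]
Here we simply drop the negative term $-1/(D_j+1)$. The function $x\mapsto 1/[x(x+1)]$ is decreasing on positive integers. Each selected degree is at least $\mu_j$, so each summand is at most $1/[\mu_j(\mu_j+1)]$. Graphicality ensures that every selected entry is positive, so $\mu_j\geq1$. Combined with Lemma~\ref{lem:static-quantile}, which gives $\mu_j\geq d^{(0)}_{D_j+j+1}-j$, this yields $\mu_j\geq L_j$. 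Therefore $\delta_j\leq D_j/[L_j(L_j+1)]$.

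One point needs care: the index $D_j+j+1$ must not exceed the length $N=n-1$ of the initial sequence. At step $j$, the current sequence has $N-j$ entries and the pivot selects $D_j$ further entries, so $D_j+1\leq N-j$. Hence the index is at most $N$ and $L_j$ is well defined.

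Summing over $j<\tau_n$ gives $B_n\leq E_n$, and hence \eqref{eq:two-phase-bound}. The final assertion of the corollary is then immediate: if $E_n$ is bounded uniformly in $n$, then $\R(G_n)-\CW(G_n)\leq \sup_n E_n+3/2=O(1)$.

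No step is a real obstacle. The only delicate points are the two bookkeeping checks above: that the maximum with $1$ in the definition of $L_j$ is justified by positivity of the selected entries, and that the order-statistic index stays within range.
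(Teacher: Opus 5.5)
Your proposal is correct and follows the paper's proof. You drop the $-1/(D_j+1)$ term in \eqref{eq:step-defect}, combine positivity of the selected entries with Lemma~\ref{lem:static-quantile} to get $\mu_j\geq L_j$, sum over $j<\tau_n$, and add the terminal bound from Theorem~\ref{thm:terminal-band}. Your check that $D_j+j+1\leq n-1$, which makes $L_j$ well defined, is a harmless detail that the paper leaves implicit.
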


\begin{proof}
All selected degrees are positive. Lemma~\ref{lem:static-quantile} therefore
gives $\mu_j\geq L_j$. Dropping the negative term in
\eqref{eq:step-defect} yields
\[
 \delta_j\leq\frac{D_j}{\mu_j(\mu_j+1)}
 \leq\frac{D_j}{L_j(L_j+1)}.
\]
Summing before $\tau_n$ and applying Theorem~\ref{thm:terminal-band} proves
\eqref{eq:two-phase-bound}.
\end{proof}

\subsection{Conjectures and finite evidence}

The local defects in \eqref{eq:defect-sum} are usually nonzero for $G_n$;
their aggregate nevertheless remains strikingly small. Exact computation
for every $2\leq n\leq10{,}000$ gives
\[
 \Delta_n:=\R(G_n)-\left\lceil\CW(G_n)\right\rceil\leq2.
\]
Selected larger values of $n$ are shown in Table~\ref{tab:data}. These calculations
use exact integer degrees and the deterministic Havel--Hakimi algorithm; only
the displayed decimal evaluation of $\CW(G_n)$ uses floating-point
arithmetic. The Supplementary Material contains the complete trajectory
archive for $4\leq n\leq10{,}000$, selected trajectories through $10^6$, an
independent verifier, and source code that regenerates any selected value of
$n$.

\begin{table}[ht]
\centering
\caption{Residue and Caro--Wei values at selected values of $n$.}
\label{tab:data}
\begin{tabular}{rrrrr}
\toprule
$n$ & $\pi(n)$ & $\R(G_n)$ & $\CW(G_n)$ & $\R(G_n)-\CW(G_n)$\\
\midrule
$1{,}000$     & $168$    & $102$    & $100.456651$ & $1.543349$\\
$10{,}000$    & $1{,}229$& $745$    & $743.380821$ & $1.619179$\\
$100{,}000$   & $9{,}592$& $5{,}869$& $5{,}867.789286$ & $1.210714$\\
$300{,}000$   & $25{,}997$& $15{,}965$& $15{,}963.723228$ & $1.276772$\\
$500{,}000$   & $41{,}538$& $25{,}584$& $25{,}582.255731$ & $1.744269$\\
$1{,}000{,}000$& $78{,}498$& $48{,}456$& $48{,}454.121402$ & $1.878598$\\
\bottomrule
\end{tabular}
\end{table}

The two-phase quantities reveal considerably more structure than the total
defect alone. At each value of $n$ in Table~\ref{tab:two-phase}, a unit-band
step occurs; write $D_*(n)$ for its pivot. The initial contribution $B_n$
stays close to one, while the terminal contribution $T_n$ is bounded
uniformly by Theorem~\ref{thm:terminal-band}. The upper bound $E_n$ is much
larger than the observed value of $B_n$, but it is expressed through the
initial degree order statistics in \eqref{eq:static-envelope}.

\begin{table}[ht]
\centering
\caption{Initial and terminal defect contributions and their
initial-degree upper bound.}
\label{tab:two-phase}
\begin{tabular}{rrrrr}
\toprule
$n$ & $D_*(n)/n$ & $B_n$ & $T_n$ & $E_n$\\
\midrule
$1{,}000$   & $0.146000$ & $0.981341$ & $0.562008$ & $6.847457$\\
$4{,}000$   & $0.145000$ & $1.011173$ & $0.779723$ & $6.930103$\\
$16{,}000$  & $0.142000$ & $1.013505$ & $1.312560$ & $7.014821$\\
$100{,}000$ & $0.143010$ & $1.006434$ & $0.204279$ & $7.015746$\\
$500{,}000$ & $0.143434$ & $1.006945$ & $0.737324$ & $7.015225$\\
$1{,}000{,}000$ & $0.143408$ & $1.007693$ & $0.870905$ & $7.018865$\\
\bottomrule
\end{tabular}
\end{table}

For $n\geq15$ and $k\geq1$, define the contribution from the $k$th reciprocal
pivot interval by
\[
 C_k(n)=
 \sum_{\substack{j<\tau_n\\ n/(k+1)\leq D_j<n/k}}\delta_j,
 \qquad
 M_k=\max_{15\leq n\leq10{,}000}C_k(n).
\]
Thus $B_n=\sum_{k\geq1}C_k(n)$. Only the first seven terms are nonzero in the
displayed finite range.

\begin{figure}[t]
\centering
\includegraphics[width=\textwidth]{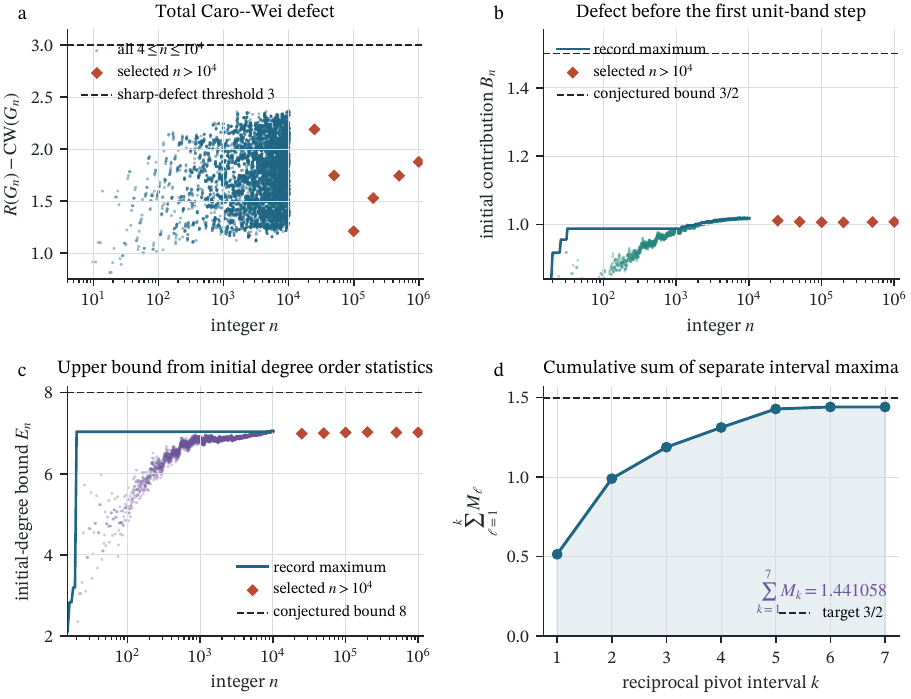}
\caption{Computational evidence for the open defect estimates. Small points
represent every integer through $10^4$; diamonds represent the selected
values $25{,}000$, $50{,}000$, $100{,}000$, $200{,}000$, $500{,}000$, and
$10^6$.
\textup{(a)} The total defect remains below the threshold $3$ that implies
the sharp integer bound in Conjecture~\ref{conj:bounded-defect}.
\textup{(b)} The contribution $B_n$ before the first unit-band step, together
with its record maximum and the proposed bound $3/2$.
\textup{(c)} The initial-degree upper bound $E_n$, its record maximum, and the
proposed bound $8$; panels \textup{(b)} and \textup{(c)} display every
$15\leq n\leq10^4$.
\textup{(d)} The cumulative sums of the separate finite-range maxima $M_k$.
Their sum is $1.441057\ldots<3/2$, although the maxima need not occur at the
same value of $n$.}
\label{fig:defect}
\end{figure}

The strongest pattern in these computations is the following uniform bound
on the difference between the residue and the Caro--Wei lower bound.

\begin{conjecture}\label{conj:bounded-defect}
If $n\geq2$, then
\[
 \left\lceil\CW(G_n)\right\rceil
 \leq\R(G_n)
 \leq\left\lceil\CW(G_n)\right\rceil+2.
\]
\end{conjecture}

\begin{conjecture}\label{conj:prefix-charge}
For every $n\geq15$, the defect contribution before the first unit-band step
satisfies
\[
 B_n\leq\frac32.
\]
\end{conjecture}

\begin{proposition}\label{prop:sharp-prefix-reduction}
Conjecture~\ref{conj:prefix-charge} implies
Conjecture~\ref{conj:bounded-defect}.
\end{proposition}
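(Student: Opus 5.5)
Assuming Conjecture~\ref{conj:prefix-charge}, we obtain both inequalities of Conjecture~\ref{conj:bounded-defect} from the decomposition $\R(G_n)-\CW(G_n)=B_n+T_n$ together with integrality of the residue. We split into the range $n\geq15$, where the prefix bound is available, and the finite range $2\leq n\leq 14$, which is checked directly.

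The lower inequality holds for every $n$ and needs no conjecture. By \eqref{eq:fms-chain} we have $\R(G_n)\geq\CW(G_n)$. Since $\R(G_n)$ is an integer, this gives $\R(G_n)\geq\lceil\CW(G_n)\rceil$.

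For the upper inequality with $n\geq15$, we combine three bounds.
\begin{itemize}
\item Conjecture~\ref{conj:prefix-charge} gives $B_n\leq 3/2$.
\item Theorem~\ref{thm:terminal-band} gives $T_n<3/2$ when the terminal unit-band suffix is nonempty. When the suffix is empty, $T_n=0$ by the convention $\tau_n=s$ and the empty-suffix clause.
\end{itemize}
In either case $T_n<3/2$, so $\R(G_n)-\CW(G_n)=B_n+T_n<3$. Hence $\R(G_n)<\CW(G_n)+3\leq\lceil\CW(G_n)\rceil+3$. Because both $\R(G_n)$ and $\lceil\CW(G_n)\rceil$ are integers, the strict inequality gives $\R(G_n)\leq\lceil\CW(G_n)\rceil+2$. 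Strictness matters here: it comes entirely from the strict bound in \eqref{eq:finite-terminal-bound}, so the conjectured bound $B_n\leq3/2$ may be non-strict.

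The remaining cases $2\leq n\leq14$ are handled by direct computation of $\R(G_n)$ and $\CW(G_n)$. This finite check is contained in the exhaustive verification through $n=10{,}000$ reported above. Alternatively, for these small graphs one can bound the defect through Corollary~\ref{cor:static-envelope} or compute the residues by hand.

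The only step needing care is confirming that the decomposition $B_n+T_n$ and the bound $T_n<3/2$ remain valid when no unit-band step occurs. This is the convention $\tau_n=s$, under which $T_n$ is an empty sum. With that settled, the implication is a short integrality argument.
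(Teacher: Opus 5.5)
Your proof is correct and is essentially the paper's argument. The lower inequality comes from \eqref{eq:fms-chain} plus integrality; for $n\geq15$, the bound $B_n+T_n<3$ (with $T_n=0$ when the suffix is empty) plus integrality gives the upper inequality; and $2\leq n\leq14$ is checked directly. Drop your aside that Corollary~\ref{cor:static-envelope} could handle the small cases: it only gives $\R(G_n)-\CW(G_n)\leq E_n+3/2$, and $E_n$ is far too large for that (e.g.\ $E_{14}=19/3$), so the finite range genuinely requires computing the residues.
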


\begin{proof}
The cases $2\leq n\leq14$ are checked directly. Let $n\geq15$. If the
terminal unit-band suffix is empty, then
$\R(G_n)-\CW(G_n)=B_n\leq3/2$, which already gives the desired integer
upper bound. Otherwise,
Theorem~\ref{thm:terminal-band} and Conjecture~\ref{conj:prefix-charge} give
\[
 \R(G_n)-\CW(G_n)=B_n+T_n<3.
\]
The integer
$\R(G_n)-\lceil\CW(G_n)\rceil$ is therefore strictly smaller than $3$ and
hence is at most $2$. The lower inequality in
Conjecture~\ref{conj:bounded-defect} follows from \eqref{eq:fms-chain}.
\end{proof}

\begin{conjecture}\label{conj:static-envelope}
For every $n\geq4$, the quantity in \eqref{eq:static-envelope} satisfies
\[
 E_n<8.
\]
\end{conjecture}

The lower inequality in Conjecture~\ref{conj:bounded-defect} follows from the
theorem of Favaron et al. and is included to display both proposed bounds.
Its upper inequality is open and
strictly stronger than Theorem~\ref{thm:main}: it asserts that the total
defect in \eqref{eq:defect-sum} is bounded independently of $n$, whereas the
stretched-exponential error in Theorem~\ref{thm:main} still diverges with
$n$. The conjecture also gives a concrete target for a direct
analysis of the Havel--Hakimi dynamics. Conjecture
\ref{conj:static-envelope} is weaker numerically but more directly tied to
the known arithmetic degree distribution: Corollary
\ref{cor:static-envelope} shows that it already proves bounded defect. Exact
calculation gives $\max_{4\leq n\leq2000}E_n=E_{20}=7.03333\ldots$, and all
values through $10{,}000$ satisfy $E_n<7.053$; the selected values at
$16{,}000$, $100{,}000$, and $500{,}000$ are below $7.016$. A proof may
therefore focus on initial degree order statistics rather than on the complete
reordered trajectory.

The quantities $C_k(n)$ partition $B_n$ according to the reciprocal pivot
intervals
\begin{equation}\label{eq:reciprocal-pivot-bands}
 \frac{n}{k+1}\leq D_j<\frac{n}{k}
 \qquad(k=1,2,\ldots).
\end{equation}
For every $15\leq n\leq10{,}000$, the sum of the separate observed maxima is
$1.441058\ldots<3/2$; only the first seven intervals occur, and the seventh
contributes less than $3.9\times10^{-6}$. This does not
prove Conjecture~\ref{conj:prefix-charge}, because maxima over a finite data
set are not uniform estimates. It does, however, reduce the desired proof to
establishing summable arithmetic upper bounds for the band contributions,
rather than estimating every Havel--Hakimi step independently. In the
same exhaustive range, $n=14$ is the unique value with $B_n>3/2$, where
$B_{14}=403/252$ and $T_{14}=0$.

\section{Recovering primes from the unlabeled graph}

We next record an intrinsic reconstruction that explains why
the arithmetic content of $G_n$ is not confined to its vertex labels. A
vertex is \emph{simplicial} if its open neighborhood induces a clique. Two
vertices are \emph{true twins} if their closed neighborhoods are equal.

\begin{proposition}\label{prop:reconstruction}
A vertex of $G_n$ is simplicial if and only if it is a prime power. The
true-twin classes of simplicial vertices are
\[
 \{p,p^2,\ldots,p^a\},
 \qquad a=\max\{j:p^j\leq n\},
\]
one class for each prime $p\leq n$. Consequently, if
\(\operatorname{Tw}(v)=\{u:N[u]=N[v]\}\) denotes the true-twin class of
$v$, then
\[
 \pi(n)=
 \sum_{\substack{v\in V(G_n)\\v\text{ simplicial}}}
 \frac{1}{|\operatorname{Tw}(v)|}.
\]
\end{proposition}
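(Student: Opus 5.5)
We prove the three assertions in order: simplicial vertices are exactly the prime powers, the true-twin classes among them are the sets of powers of a single prime, and the counting formula then follows at once. Throughout, $N(x)$ consists of all $y\neq x$ in $\{2,\ldots,n\}$ sharing a prime with $x$.

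\emph{Prime powers are simplicial.} If $x=p^j$, every neighbor of $x$ is divisible by $p$. Any two neighbors therefore share $p$ and are adjacent, so $N(x)$ is a clique.

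\emph{Other vertices are not simplicial.} Suppose $x$ has two distinct prime divisors $p<q$. Then $p,q\leq x\leq n$, so both are vertices adjacent to $x$. Since $x$ is not a prime power, $p,q\neq x$. The vertices $p$ and $q$ are coprime, hence nonadjacent, and this pair witnesses that $N(x)$ is not a clique. (The vertex $x=1$ is absent, so every vertex has at least one prime divisor.)

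\emph{Twin classes.} Let $p$ be prime. For $x=p^j\leq n$, the closed neighborhood $N[x]$ is the set of multiples of $p$ in $[2,n]$, which does not depend on $j$. So all powers $p,p^2,\ldots,p^a$ lie in one true-twin class. Conversely, a simplicial vertex $u=r^i$ with $r\neq p$ lies in $N[r]$ but not in $N[p]$, since $\gcd(r^i,p)=1$. Hence powers of distinct primes are not twins, and the twin classes of simplicial vertices are exactly the sets $\{p,\ldots,p^a\}$.

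One point needs care: the statement defines $\operatorname{Tw}(v)$ over all vertices, so we must check that no composite non-prime-power $y$ has $N[y]=N[p]$. This holds because such a $y$ is adjacent to its second prime divisor $q\neq p$, and $q\notin N[p]$. (True twins of a simplicial vertex are in any case simplicial, but the direct check settles it.) Thus $|\operatorname{Tw}(v)|=a_p$ for each simplicial $v$ that is a power of $p$.

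\emph{Counting.} Summing $1/|\operatorname{Tw}(v)|$ over the $a_p$ simplicial vertices in the class of $p$ gives exactly $1$. Summing over all classes gives one unit for each prime $p\leq n$, so the total is $\pi(n)$. The only nonroutine step is the twin-class verification above, including the case of non-simplicial vertices just handled; everything else is immediate from the definitions.
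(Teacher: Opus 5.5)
Your proof is correct and follows the paper's argument essentially step for step. Prime powers are simplicial because all their neighbors share $p$; a vertex with two distinct prime divisors has those primes as nonadjacent neighbors; and every power of $p$ has closed neighborhood equal to the multiples of $p$. Your explicit check that no non-prime-power vertex can be a true twin of a power of $p$ is needed because $\operatorname{Tw}(v)$ ranges over all vertices. The paper leaves this point implicit, and you handle it correctly.
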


\begin{proof}
If $v=p^a$, every neighbor of $v$ is divisible by $p$, so its neighborhood
is a clique. If two distinct primes $p$ and $q$ divide $v$, then $p$ and $q$
are nonadjacent neighbors of $v$; hence $v$ is not simplicial. The closed
neighborhood of every power of $p$ is precisely the set of multiples of $p$
in $\{2,\ldots,n\}$. Powers of the same prime are therefore true twins,
while powers of different primes are not. Each true-twin class contributes
one to the displayed sum.
\end{proof}

This reconstruction is not proposed as a faster prime-counting algorithm:
constructing the graph is more expensive than a sieve. Its significance is
structural. The isomorphism type of the unlabeled graph determines
$n$ from its order and then determines $\pi(n)$ from the displayed sum,
while its degree sequence alone determines a residue having the
same first-order scale but the nontrivial constant $\zeta(2)-1$.

\section{Riemann-hypothesis-scale information in the graph}

The exact identities above preserve more than the complete asymptotic series.
They also preserve the square-root error scale in the prime number theorem.
For real $x\geq2$, write $G_x=G_{\lfloor x\rfloor}$. To make the error scale
precise, let
\[
 \Li(x)=\int_2^x\frac{dt}{\log t}\qquad(x\geq2),
\]
and set $\Li(x)=\pi(x)=0$ for $x<2$. Put $E(x)=\pi(x)-\Li(x)$. The elementary
bound $E(x)=O(x)$ ensures that this function has polynomial growth. The
classical von Koch criterion states that the Riemann hypothesis is equivalent to
\begin{equation}\label{eq:von-koch}
 E(x)=O(\sqrt x\log(ex));
\end{equation}
see, for example, \cite{Titchmarsh1986,MontgomeryVaughan2007}.
Here and below, the Riemann hypothesis means that every nontrivial zero of
$\zeta(s)$ has real part $1/2$.

\subsection{A boundedly invertible dilation operator}

For $0<r<1$, define the dilation operator $D_r f(x)=f(rx)$, with every
function extended by zero below $2$. The following elementary observation
provides the bounded-inverse estimate used throughout this section.

\begin{lemma}\label{lem:dilation-filter}
Let
\[
 B=\sum_{j\in J}b_jD_{r_j},\qquad 0<r_j<1,
\]
where $J$ is finite or countable, the $b_j$ are real or complex scalars, and
\begin{equation}\label{eq:dilation-contraction}
 \kappa=\sum_{j\in J}|b_j|\sqrt{r_j}<1.
\end{equation}
If $f$ has at most polynomial growth, meaning that
$f(x)=O(x^A\log^m(ex))$ for some $A,m\geq0$, then
\[
 f(x)=O(\sqrt x\log(ex))
 \quad\Longleftrightarrow\quad
 (I-B)f(x)=O(\sqrt x\log(ex)).
\]
\end{lemma}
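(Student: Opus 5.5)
One direction is immediate and the other rests on an a priori growth bound followed by a bootstrap. Write $w(x)=\sqrt x\log(ex)$ for the target scale.

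\emph{The easy direction.} Suppose $f=O(w)$. For each $j$ we have
\[
 |D_{r_j}f(x)|\le C\sqrt{r_jx}\,\log(er_jx)\le C\sqrt{r_j}\,w(x),
\]
because $r_j<1$ and $f$ vanishes below $2$. Summing over $j$ gives $|Bf(x)|\le C\kappa\,w(x)$. The series defining $Bf$ converges absolutely since $\kappa<1$, so $(I-B)f=O(w)$.

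\emph{The converse.} Suppose $g=(I-B)f=O(w)$, and write $f=g+Bf$. A Neumann-series inversion $f=\sum_m B^m g$ is awkward here, because the polynomial growth of $f$ is the only a priori control. I would instead run a weighted supremum argument. For $X\ge2$ set
\[
 S(X)=\sup_{2\le x\le X}\frac{|f(x)|}{w(x)}.
\]
This is finite for every $X$, since $f$ is presumably locally bounded; if needed I would work with $f(x)=O(x^A\log^m)$ on compact ranges, so $S(X)<\infty$ for each fixed $X$ even though $S$ may grow. For $x\le X$ each dilated point $r_jx$ lies in $[2,X]$ or below $2$, where $f$ vanishes. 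Hence
\[
 |Bf(x)|\le\sum_j|b_j|\,S(X)\,w(r_jx)\le\kappa\,S(X)\,w(x).
\]
Combining with $|g(x)|\le C_g w(x)$ gives
\[
 |f(x)|\le C_g w(x)+\kappa S(X) w(x)\quad\text{for all }x\le X,
\]
so $S(X)\le C_g+\kappa S(X)$. Since $S(X)$ is finite we may rearrange to get $S(X)\le C_g/(1-\kappa)$. This bound is uniform in $X$, which gives $f=O(w)$.

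\emph{Main obstacle.} The delicate step is guaranteeing $S(X)<\infty$ for every finite $X$. The polynomial growth hypothesis is what rules out pathological behaviour, such as $f$ unbounded near some finite point, that would make the rearrangement invalid. The hypothesis as stated is asymptotic, so I would interpret it together with local boundedness on $[2,X]$. That holds for all functions in the intended applications, which are step functions such as $\pi$, $\mathrm{Li}$ and the degree-class counts, and I would state the assumption explicitly. For countable $J$, the interchange of sum and supremum is justified by absolute convergence, since $\sum_j|b_j|\sqrt{r_j}<\infty$.
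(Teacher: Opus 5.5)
Your proof is correct, but your converse takes a different route from the paper's. The paper works in the Banach space $W$ with norm $\sup_{x\ge2}|f(x)|/(\sqrt x\log(ex))$. It shows $\|B\|_W\le\kappa$, inverts $I-B$ by a Neumann series, and sets $h=(I-B)^{-1}g$. It then has to prove that $u=f-h$, which satisfies $u=Bu$, vanishes. It does this with a second contraction argument in the polynomial-weight norm $\sup_{x\ge2}|u(x)|/x^{A}$ with $A>1/2$, and that is the only place the polynomial-growth hypothesis enters. Your truncated suprema $S(X)$ instead use the fact that each $D_{r_j}$ only samples smaller arguments. The a priori inequality $S(X)\le C_g+\kappa S(X)$ therefore closes on every bounded range and gives the same constant $C_g/(1-\kappa)$ directly. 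Your route is shorter, needs no auxiliary space, and shows that polynomial growth can be weakened to local boundedness. The paper's route, in exchange, exhibits $(I-B)^{-1}=\sum_{\ell\ge0}B^\ell$ as a bounded operator on $W$, and that operator viewpoint is reused for the Dirichlet-inverse argument in Theorem~\ref{thm:weighted-profile}. As for your ``main obstacle'': the paper reads $O(x^A\log^m(ex))$ as a bound valid on all of $[2,\infty)$, since its own proof needs $\sup_{x\ge2}|u(x)|/x^A<\infty$. Under that reading $S(X)<\infty$ is immediate and your constant $C_g$ is uniform on $[2,\infty)$, so you need no extra assumption. Under a purely asymptotic reading, local boundedness of $f$ still makes $g$ bounded on compact ranges. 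This is because, for fixed $x$, only the indices with $r_j\ge 2/x$ contribute, and $\sum_{r_j\ge2/x}|b_j|\le\kappa\sqrt{x/2}$.
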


\begin{proof}
Let $W$ be the Banach space of functions $f$ on $[2,\infty)$ for which the weighted
norm
\[
 \|f\|_W=\sup_{x\geq2}\frac{|f(x)|}{\sqrt x\log(ex)}
\]
is finite. The series defining $B$ converges absolutely in operator norm.
If $rx\geq2$, then
\[
 \frac{\sqrt{rx}\log(erx)}{\sqrt x\log(ex)}\leq\sqrt r;
\]
if $rx<2$, the extended function vanishes. Thus
$\|D_r\|_W\leq\sqrt r$, and \eqref{eq:dilation-contraction} gives
$\|B\|_W\leq\kappa<1$. The Neumann series
$(I-B)^{-1}=\sum_{\ell\geq0}B^\ell$ converges in operator norm. This proves
the forward implication. For the converse, put $g=(I-B)f\in W$ and
$h=(I-B)^{-1}g\in W$. Then $u=f-h$ satisfies $u=Bu$. Both $f$ and $h$ have
at most polynomial growth, so there is an exponent $A>1/2$ for which
$\|u\|_A=\sup_{x\geq2}|u(x)|/x^A<\infty$. On this polynomial-growth space,
$\|D_{r_j}\|_A\leq r_j^A\leq\sqrt{r_j}$ and hence
$\|B\|_A\leq\kappa<1$. The identity $u=Bu$ now forces $u=0$, so $f=h\in W$.
\end{proof}

\subsection{Fixed degree classes}

For a fixed integer $k\geq1$, let $s_k(G_x)$ be the number of true-twin
classes of simplicial vertices whose common degree is $k-1$. This statistic
is determined by the unlabeled graph. Proposition~\ref{prop:reconstruction}
identifies it with the number of primes $p\leq x$ for which
$\lfloor x/p\rfloor=k$.
We refer to the sequence $(s_k(G_x))_{k\geq1}$ as the
\emph{simplicial degree profile} of $G_x$.

\begin{theorem}\label{thm:rh-profile}
For every fixed integer $k\geq1$, the Riemann hypothesis is equivalent to
\begin{equation}\label{eq:rh-profile}
 s_k(G_x)=\Li(x/k)-\Li(x/(k+1))+O(\sqrt x\log(ex)).
\end{equation}
In particular, for $k=1$ it is equivalent to
\[
 i(G_x)=\Li(x)-\Li(x/2)+O(\sqrt x\log(ex)),
\]
where $i(G_x)$ is the number of isolated vertices.
\end{theorem}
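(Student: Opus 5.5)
By Proposition~\ref{prop:reconstruction}, $s_k(G_x)=m_k(\lfloor x\rfloor)=\pi(x/k)-\pi(x/(k+1))$, since $\lfloor\lfloor x\rfloor/p\rfloor=\lfloor x/p\rfloor$. Subtracting the $\Li$ terms gives the exact identity
\[
 s_k(G_x)-\Li(x/k)+\Li(x/(k+1))=E(x/k)-E(x/(k+1)),
\]
with the convention that $E$ vanishes below $2$. Write $f(x)=E(x/k)=D_{1/k}E(x)$. Then the right side is $f(x)-f(kx/(k+1))=(I-D_{k/(k+1)})f(x)$. So \eqref{eq:rh-profile} says precisely that $(I-B)f=O(\sqrt x\log(ex))$ with $B=D_{k/(k+1)}$. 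The difficulty is that the single coefficient gives $\kappa=\sqrt{k/(k+1)}<1$, so Lemma~\ref{lem:dilation-filter} applies directly.

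\emph{Forward direction.} Assume RH. By \eqref{eq:von-koch}, $E(y)=O(\sqrt y\log(ey))$. Applied at $y=x/k$ and $y=x/(k+1)$, this gives both error terms $O(\sqrt x\log(ex))$, which is \eqref{eq:rh-profile}. This direction needs no inversion.

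\emph{Converse.} Assume \eqref{eq:rh-profile}, i.e.\ $(I-B)f\in W$. The function $f$ has polynomial growth because $E(x)=O(x)$. Lemma~\ref{lem:dilation-filter}, with $J$ a singleton, $b=1$ and $r=k/(k+1)$, yields $f(x)=E(x/k)=O(\sqrt x\log(ex))$ for $x\ge2$. Substituting $x=ky$ gives $E(y)=O(\sqrt{ky}\log(eky))=O_k(\sqrt y\log(ey))$ for all $y\ge2/k$, in particular for all $y\ge2$. The von Koch criterion then gives RH.

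Two points need care; both are minor, and the main obstacle is only this bookkeeping.
\begin{itemize}
\item The zero-extension conventions must match the lemma. The identity $s_k=(I-B)f$ uses $\pi=\Li=0$ below $2$, and $D_r$ extends by zero below $2$. For $x/(k+1)<2$ the term $\Li(x/(k+1))$ is zero, as is $\pi$, so the identity holds for every $x\ge2$. For small $x$ where $x/k<2$, $f$ is also zero, which is consistent.
\item The implied constant depends on $k$, which is allowed since $k$ is fixed.
\end{itemize}

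The case $k=1$ follows once the count is identified with the isolated vertices. A vertex is isolated iff it is a prime $p>x/2$. Such a vertex is simplicial with twin class $\{p\}$ (here $p^2>x$), so $s_1(G_x)=i(G_x)$. Conversely, every simplicial class of degree $0$ consists of isolated vertices.
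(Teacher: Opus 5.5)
Your proof is correct and follows the paper's argument. You use the exact identity $s_k(G_x)=\pi(x/k)-\pi(x/(k+1))$ from Proposition~\ref{prop:reconstruction}, rewrite the error as $(I-D_{k/(k+1)})$ applied to a dilate of $E$, and invert using Lemma~\ref{lem:dilation-filter} with $\kappa=\sqrt{k/(k+1)}<1$. The only difference is cosmetic: you apply the lemma to $f=D_{1/k}E$ and rescale afterwards, whereas the paper first replaces $x$ by $kx$ and applies the lemma to $E$ itself.
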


\begin{proof}
Proposition~\ref{prop:reconstruction} and \eqref{eq:prime-degree} give the
exact identity
\[
 s_k(G_x)=\pi(x/k)-\pi(x/(k+1)).
\]
Put $r=k/(k+1)$. After subtracting the two logarithmic-integral terms,
the error in \eqref{eq:rh-profile} is
\[
 E(x/k)-E(x/(k+1))=(I-D_r)E(x/k).
\]
Since $\sqrt r<1$, Lemma~\ref{lem:dilation-filter} and
\eqref{eq:von-koch} prove the forward implication. Conversely, the estimate
in \eqref{eq:rh-profile}, after replacing $x$ by $kx$ and using that $k$ is
fixed, states that $(I-D_r)E(x)=O(\sqrt x\log(ex))$;
Lemma~\ref{lem:dilation-filter} then gives
\eqref{eq:von-koch}. When $k=1$, the isolated
vertices are exactly the primes in $(x/2,x]$.
\end{proof}

\subsection{Weighted profile transforms}

The fixed-degree-class criteria belong to a larger class of intrinsic graph
statistics. Let $\mathcal C(G)$ be the set of true-twin classes consisting
of simplicial vertices. Simpliciality and degree are constant on each such
class. We define the \emph{simplicial twin-degree polynomial} by
\begin{equation}\label{eq:simplicial-polynomial}
 \mathcal S_G(z)=\sum_{C\in\mathcal C(G)}z^{d(C)},
\end{equation}
where $d(C)$ is the common degree of the vertices in $C$. This is an
isomorphism invariant of every finite graph. Proposition~\ref{prop:reconstruction}
gives the particularly transparent identity
\begin{equation}\label{eq:simplicial-polynomial-primes}
 \mathcal S_{G_x}(z)
 =\sum_{p\leq x}z^{\lfloor x/p\rfloor-1}
 =\sum_{k\geq1}z^{k-1}s_k(G_x).
\end{equation}

We next show that this polynomial is one member of an invertible family of
weighted sums of the simplicial degree profile. For $\theta>0$ and $m\geq0$,
let $W_{\theta,m}$
be the space of functions $f$ on $[2,\infty)$ with
\[
 \|f\|_{\theta,m}
 =\sup_{x\geq2}\frac{|f(x)|}{x^\theta\log^m(ex)}<\infty,
\]
again extending functions by zero below two. Then
$\|D_{1/j}\|_{W_{\theta,m}}\leq j^{-\theta}$.
For a sequence $c=(c_j)_{j\geq1}$, write
\[
 \|c\|_{\ell^1_\theta}=\sum_{j\geq1}\frac{|c_j|}{j^\theta}.
\]
This weighted sequence space is a Banach algebra under Dirichlet convolution
$(c\ast d)_n=\sum_{r\mid n}c_r d_{n/r}$, since
$\|c\ast d\|_{\ell^1_\theta}\leq
\|c\|_{\ell^1_\theta}\|d\|_{\ell^1_\theta}$.

\begin{theorem}\label{thm:weighted-profile}
Let $w=(w_k)_{k\geq1}$ be a real sequence, put $a_1=w_1$, and put
$a_j=w_j-w_{j-1}$ for $j\geq2$. Suppose $a_1\neq0$, $a\in
\ell^1_\theta$, and the Dirichlet inverse $b=a^{(-1)}$, characterized by
$a\ast b=e$ with $e_1=1$ and $e_j=0$ for $j\geq2$, also
belongs to $\ell^1_\theta$.
Define
\[
 \mathcal T_w(G_x)=\sum_{k\geq1}w_k s_k(G_x),
 \qquad
 \mathcal M_w(x)=\sum_{k\geq1}w_k
 \left(\Li(x/k)-\Li(x/(k+1))\right).
\]
For every $m\geq0$,
\begin{equation}\label{eq:weighted-profile-equivalence}
 \pi(x)-\Li(x)=O\!\left(x^\theta\log^m(ex)\right)
\end{equation}
if and only if
\begin{equation}\label{eq:weighted-profile-graph-equivalence}
\mathcal T_w(G_x)-\mathcal M_w(x)
 =O\!\left(x^\theta\log^m(ex)\right).
\end{equation}
The hypotheses on $b$ hold, in particular, whenever
\begin{equation}\label{eq:weighted-profile-contraction}
 \kappa_\theta
 =\frac{1}{|a_1|}\sum_{j=2}^{\infty}\frac{|a_j|}{j^\theta}<1.
\end{equation}
\end{theorem}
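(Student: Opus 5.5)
Write $E(x)=\pi(x)-\Li(x)$, extended by zero below $2$ as in the definition of $D_r$. The plan is to reduce the weighted statistic to a Dirichlet-convolution operator acting on $E$, and then invert it in $W_{\theta,m}$ by Banach-algebra arguments, in the same way as Lemma~\ref{lem:dilation-filter}.

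\emph{Step 1: an exact identity.} By Proposition~\ref{prop:reconstruction} and \eqref{eq:prime-degree}, $s_k(G_x)=\pi(x/k)-\pi(x/(k+1))$. For fixed $x$ only finitely many terms are nonzero, so summation by parts, exactly as in the proof of Proposition~\ref{prop:prime-mass-exact}, gives
\[
 \mathcal T_w(G_x)=\sum_{j\geq1}a_j\pi(x/j).
\]
The same identity holds for $\mathcal M_w(x)$ with $\Li$ in place of $\pi$. For $\Li$ the terms with $x/j<2$ vanish by the zero extension, so the rearrangement is again a finite sum. Hence
\[
 \mathcal T_w(G_x)-\mathcal M_w(x)=(A_aE)(x),\qquad A_a=\sum_{j\geq1}a_jD_{1/j},
\]
where $D_1=I$. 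Note that $\mathcal M_w$ itself must be read with this zero convention, so that the finite summation by parts is valid.

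\emph{Step 2: the operator calculus.} On $W_{\theta,m}$ we have $\|D_{1/j}\|\leq j^{-\theta}$, since $(x/j)^\theta\log^m(ex/j)\leq j^{-\theta}x^\theta\log^m(ex)$. It follows that $\|A_c\|\leq\|c\|_{\ell^1_\theta}$. Moreover $D_{1/r}D_{1/s}=D_{1/(rs)}$, so $A_cA_d=A_{c\ast d}$; absolute convergence justifies the rearrangement. Since $a,b\in\ell^1_\theta$ and $a\ast b=b\ast a=e$, we get $A_aA_b=A_bA_a=I$ on $W_{\theta,m}$.

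The forward direction is then immediate: if $E\in W_{\theta,m}$, then $A_aE\in W_{\theta,m}$.

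For the converse, suppose $g=A_aE\in W_{\theta,m}$ and set $h=A_bg\in W_{\theta,m}$. It remains to show $E=h$. Here I use the extra room available from polynomial growth, as in Lemma~\ref{lem:dilation-filter}. We know $E(x)=O(x)$, so $E$ lies in the larger space $W_{\theta',0}$ for any $\theta'\geq\max(1,\theta)$. Since $\ell^1_{\theta}\subseteq\ell^1_{\theta'}$ with smaller norms, both $a$ and $b$ belong to $\ell^1_{\theta'}$. Therefore $A_bA_a=I$ also holds on $W_{\theta',0}$, which contains both $E$ and $h$. This gives $E=A_bA_aE=A_bg=h$, so $E\in W_{\theta,m}$.

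This inclusion is the delicate point. The converse uses only $b\in\ell^1_\theta$, not a contraction estimate, and this is exactly why the theorem assumes that the Dirichlet inverse is summable rather than merely that $a_1\neq0$.

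\emph{Step 3: the sufficient condition.} Write $a=a_1(e+c)$, where $c_1=0$ and $c_j=a_j/a_1$ for $j\geq2$. Condition \eqref{eq:weighted-profile-contraction} says $\|c\|_{\ell^1_\theta}=\kappa_\theta<1$. The Neumann series
\[
 b=a_1^{-1}\sum_{\ell\geq0}(-1)^\ell c^{\ast\ell}
\]
then converges in the Banach algebra $\ell^1_\theta$, and by continuity of convolution it satisfies $a\ast b=e$. By uniqueness of the Dirichlet inverse (possible because $a_1\neq0$), this series is $b$, so $b\in\ell^1_\theta$.

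I expect the main obstacle to be the bookkeeping in Step 1, not Step 2. One must check that interchanging the $k$-sum with the telescoping produces exactly the coefficients $a_j$, including the boundary term at large $k$, where $w_k$ is multiplied by zero differences. This should be done at fixed $x$ with finitely many nonzero terms before any norm estimate is applied.
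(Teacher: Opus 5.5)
Your proposal is correct and follows the paper's proof. Both arguments use summation by parts to obtain $\mathcal T_w(G_x)-\mathcal M_w(x)=\sum_j a_jE(x/j)$, the bound $\|D_{1/j}\|\le j^{-\theta}$ on $W_{\theta,m}$ together with $D_{1/j}D_{1/k}=D_{1/(jk)}$, and a Neumann series in $\ell^1_\theta$ for the sufficient condition. The only difference is in the converse: the paper gets $\mathcal D_b\mathcal D_aE=E$ directly because every sum is finite at each fixed $x$, so it needs no a priori growth of $E$, whereas you get the same identity by placing $E$ in the larger space $W_{\theta',0}$ via $E(x)=O(x)$; both justifications are valid.
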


\begin{proof}
Both profile sums are finite at each $x$. Summation by parts and the exact
formula for $s_k(G_x)$ give
\[
 \mathcal T_w(G_x)
 =\sum_{j\geq1}a_j\pi(x/j),
 \qquad
 \mathcal M_w(x)=\sum_{j\geq1}a_j\Li(x/j).
\]
Consequently, with $E=\pi-\Li$,
\[
 \mathcal T_w(G_x)-\mathcal M_w(x)
 =\mathcal D_aE(x),
 \qquad
 \mathcal D_c=\sum_{j\geq1}c_jD_{1/j}.
\]
The dilation estimate gives
$\|\mathcal D_c\|_{W_{\theta,m}}\leq\|c\|_{\ell^1_\theta}$, while
$D_{1/j}D_{1/k}=D_{1/(jk)}$. If $E\in W_{\theta,m}$, this proves the forward
implication. Conversely, suppose $\mathcal D_aE\in W_{\theta,m}$. For each
fixed $x$, all sums involving $E(x/j)$ are finite because functions vanish
below $2$. Dirichlet convolution therefore gives the pointwise identity
$\mathcal D_b\mathcal D_aE=\mathcal D_{b\ast a}E=E$, without first assuming
$E\in W_{\theta,m}$. Since $\mathcal D_b$ is bounded on
$W_{\theta,m}$, the converse follows.

For the last assertion, write $a=a_1(e+h)$, where $e$ is the convolution
identity defined above and $h_j=a_j/a_1$ for $j\geq2$. Condition
\eqref{eq:weighted-profile-contraction} says
$\|h\|_{\ell^1_\theta}<1$. Hence
\[
 a^{(-1)}=a_1^{-1}\sum_{r\geq0}(-h)^{\ast r}
\]
converges in $\ell^1_\theta$.
\end{proof}

\begin{corollary}\label{cor:polynomial-profile}
Fix $z\in[0,1]$, $\theta>0$, and $m\geq0$, and define
\[
 \mathcal M_z(x)=\sum_{k\geq1}z^{k-1}
 \left(\Li(x/k)-\Li(x/(k+1))\right).
\]
Then
\[
 \pi(x)-\Li(x)=O\!\left(x^\theta\log^m(ex)\right)
\]
if and only if
\[
 \mathcal S_{G_x}(z)-\mathcal M_z(x)
 =O\!\left(x^\theta\log^m(ex)\right).
\]
In particular, for every fixed $z\in[0,1]$, the latter estimate with
$\theta=1/2$ and $m=1$ is equivalent to the Riemann hypothesis.
\end{corollary}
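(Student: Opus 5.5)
This corollary is Theorem~\ref{thm:weighted-profile} with the weights $w_k=z^{k-1}$. By \eqref{eq:simplicial-polynomial-primes} we have $\mathcal T_w(G_x)=\mathcal S_{G_x}(z)$, and $\mathcal M_w(x)=\mathcal M_z(x)$ by definition. The work is therefore to verify the hypotheses on the difference sequence $a$ and its Dirichlet inverse, for every $z\in[0,1]$ and every $\theta>0$.

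First compute the differences: $a_1=1$ and $a_j=z^{j-1}-z^{j-2}=-(1-z)z^{j-2}$ for $j\geq2$. Two cases are immediate.
\begin{itemize}
\item If $z=1$, then $a=e$, so $b=e$ and both lie in $\ell^1_\theta$. Here $\mathcal S_{G_x}(1)=\pi(x)$ and $\mathcal M_1(x)=\Li(x)$, and the statement is trivial.
\item If $z=0$, then $a_1=1$, $a_2=-1$ and $a_j=0$ otherwise. Then $b_{2^i}=1$ and all other entries vanish, so $\|b\|_{\ell^1_\theta}=\sum_i 2^{-i\theta}<\infty$. This recovers the $k=1$ case of Theorem~\ref{thm:rh-profile}.
\end{itemize}

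For $0<z<1$ we have $a\in\ell^1_\theta$ trivially, since $\sum_j z^{j}<\infty$. The issue is the inverse. The contraction condition \eqref{eq:weighted-profile-contraction} reads
\[
(1-z)\sum_{j\geq2}z^{j-2}j^{-\theta}<1 .
\]
This holds for every $\theta>0$. The weights $z^{j-2}(1-z)$ for $j\geq2$ sum to exactly $1$, and every factor $j^{-\theta}\leq 2^{-\theta}<1$. So the left side is at most $2^{-\theta}<1$, and Theorem~\ref{thm:weighted-profile} applies directly.

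The same bound also handles $z=0$ with no separate case, since then $\kappa_\theta=2^{-\theta}$. It likewise covers $z=1$, where $\kappa_\theta=0$. So the uniform estimate $\kappa_\theta\leq 2^{-\theta}$ settles all $z\in[0,1]$ at once.

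For the final sentence, take $\theta=1/2$ and $m=1$. The equivalence then becomes $E(x)=O(\sqrt x\log(ex))$, and by the von Koch criterion \eqref{eq:von-koch} this is equivalent to the Riemann hypothesis.

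The only point needing care is the pointwise finiteness used in the theorem. Since $\mathcal S_{G_x}(z)$ is a finite sum, summation by parts must be valid; for $z$ on the boundary this is already covered inside the theorem's proof. I expect no real obstacle; the contraction check above is the entire content.
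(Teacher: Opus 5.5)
Your proposal is correct and follows the paper's proof. Like the paper, you compute $a_1=1$ and $a_j=-(1-z)z^{j-2}$, bound $\kappa_\theta\leq 2^{-\theta}(1-z)\sum_{j\geq2}z^{j-2}=2^{-\theta}<1$, invoke Theorem~\ref{thm:weighted-profile}, and treat $z=1$ as trivial because $\mathcal S_{G_x}(1)=\pi(x)$ and $\mathcal M_1(x)=\Li(x)$; your explicit inverse $b_{2^i}=1$ at $z=0$ is valid but redundant, as you note yourself.
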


\begin{proof}
For $w_k=z^{k-1}$ and $z<1$, we have $a_1=1$ and
$a_j=-(1-z)z^{j-2}$ for $j\geq2$. Therefore
\[
 \kappa_\theta
 =(1-z)\sum_{j\geq2}\frac{z^{j-2}}{j^\theta}
 \leq 2^{-\theta}(1-z)\sum_{j\geq2}z^{j-2}
 =2^{-\theta}<1.
\]
Theorem~\ref{thm:weighted-profile} applies. When $z=1$, both statements
coincide because $\mathcal S_{G_x}(1)=\pi(x)$ and
$\mathcal M_1(x)=\Li(x)$.
\end{proof}

Thus the polynomial interpolates between the isolated-vertex statistic
$\mathcal S_{G_x}(0)=s_1(G_x)$ and the exact class count
$\mathcal S_{G_x}(1)=\pi(x)$. For $0\leq z<1$, summation by parts also gives
the exact recursive identity
\begin{equation}\label{eq:profile-recursion}
 \pi(x)=\mathcal S_{G_x}(z)
 +(1-z)\sum_{j\geq2}z^{j-2}\pi(x/j).
\end{equation}
The arguments $x/j$ are smaller than $x$, so a fixed polynomial evaluation
across the nested graph family determines the prime count recursively. This
identity is structural and is not proposed as a faster alternative to a
sieve. Corollary~\ref{cor:polynomial-profile} says more: the transform and
its inverse preserve every positive-power error scale, not only the main term.

\subsection{A degree-sequence criterion}

The weight $w_k=1/k$ gives
\[
 \mathcal T_w(G_x)=\sum_{k\geq1}\frac{s_k(G_x)}{k},
\]
which is exactly the contribution of the prime vertices to the Caro--Wei
sum. Here $a_1=1$ and $a_j=-1/(j(j-1))$ for $j\geq2$, so
\eqref{eq:weighted-profile-contraction} holds for every $\theta>0$. The full
Caro--Wei sum adds the composite-vertex contribution, which is
$O(\sqrt x)$ by Lemma~\ref{lem:composite-mass}. It therefore gives
a distinct statistic determined only by the degree sequence and governed by
the same bounded-inverse argument. Define
\begin{equation}\label{eq:filtered-li}
 \mathcal L_{\rm CW}(x)=\Li(x)-
 \sum_{j=2}^{\lfloor x/2\rfloor}
 \frac{\Li(x/j)}{j(j-1)}.
\end{equation}

\begin{theorem}\label{thm:rh-cw}
The Riemann hypothesis is equivalent to
\begin{equation}\label{eq:rh-cw}
 \CW(G_x)=\mathcal L_{\rm CW}(x)+O(\sqrt x\log(ex)).
\end{equation}
\end{theorem}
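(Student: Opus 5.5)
The plan is to reduce Theorem~\ref{thm:rh-cw} to Theorem~\ref{thm:weighted-profile} with the weight $w_k=1/k$, and to handle the composite vertices with Lemma~\ref{lem:composite-mass}.

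First, I will split $\CW(G_x)=\cP(\lfloor x\rfloor)+\mathcal E_{\rm comp}(x)$, where $\mathcal E_{\rm comp}(x)$ is the composite contribution. Lemma~\ref{lem:composite-mass} gives $\mathcal E_{\rm comp}(x)=O(\sqrt x)$, which is absorbed in $O(\sqrt x\log(ex))$. By \eqref{eq:prime-mass}, $\cP(\lfloor x\rfloor)=\sum_k s_k(G_x)/k=\mathcal T_w(G_x)$; here I use that $\lfloor x/p\rfloor=\lfloor\lfloor x\rfloor/p\rfloor$, so the degree classes of $G_x$ are the intervals in real $x$. The estimate \eqref{eq:rh-cw} is therefore equivalent to $\mathcal T_w(G_x)-\mathcal L_{\rm CW}(x)=O(\sqrt x\log(ex))$.

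Next, I will identify $\mathcal L_{\rm CW}$ with $\mathcal M_w$. Summation by parts, exactly as in the proof of Proposition~\ref{prop:prime-mass-exact}, gives $\mathcal M_w(x)=\sum_j a_j\Li(x/j)$ with $a_1=1$ and $a_j=-1/(j(j-1))$. Because $\Li(y)=0$ for $y<2$, the sum truncates at $j\le\lfloor x/2\rfloor$ and agrees with \eqref{eq:filtered-li}. The one point to check is convergence of the telescoping: $\Li(x/k)-\Li(x/(k+1))$ vanishes once $x/k<2$, so all sums are finite for each $x$ and the rearrangement is legitimate. Similarly, $\mathcal T_w(G_x)=\sum_j a_j\pi(x/j)$, which is \eqref{eq:prime-mass-pi}.

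It remains to verify the hypotheses of Theorem~\ref{thm:weighted-profile} at $\theta=1/2$, $m=1$. The condition is $\kappa_{1/2}=\sum_{j\ge2}j^{-1/2}/(j(j-1))<1$. Bounding the first term exactly and the tail crudely: the $j=2$ term is $2^{-1/2}/2\approx0.354$, the $j=3$ term is about $0.096$, and the tail $\sum_{j\ge4}j^{-1/2}/(j(j-1))\le 4^{-1/2}\sum_{j\ge4}1/(j(j-1))=1/6$. The total is below $0.62<1$. Theorem~\ref{thm:weighted-profile} then gives that $E(x)=O(\sqrt x\log(ex))$ if and only if $\mathcal T_w(G_x)-\mathcal M_w(x)=O(\sqrt x\log(ex))$, and \eqref{eq:von-koch} converts the former into the Riemann hypothesis. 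The main obstacle is only bookkeeping: making sure the composite error and the truncation at $\lfloor x/2\rfloor$ introduce nothing larger than $O(\sqrt x)$. Both follow from the observations above.
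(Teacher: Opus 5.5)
Your proposal is correct and follows the paper's proof: it writes $\CW(G_x)-\mathcal L_{\rm CW}(x)=(I-Q)E(x)+O(\sqrt x)$ using Proposition~\ref{prop:prime-mass-exact} and Lemma~\ref{lem:composite-mass}, and inverts $I-Q$ through the same contraction $\sum_{j\ge2}j^{-1/2}/(j(j-1))<1$. The only difference is cosmetic: the paper applies the Neumann-series Lemma~\ref{lem:dilation-filter} directly, whereas you route through the Dirichlet-inverse form in Theorem~\ref{thm:weighted-profile}, which rests on the same operator-norm bound.
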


\begin{proof}
Let
\[
 Q=\sum_{j=2}^{\infty}\frac{1}{j(j-1)}D_{1/j}.
\]
Proposition~\ref{prop:prime-mass-exact} and
Lemma~\ref{lem:composite-mass} give
\[
 \CW(G_x)=(I-Q)\pi(x)+O(\sqrt x),
 \qquad
 \mathcal L_{\rm CW}(x)=(I-Q)\Li(x).
\]
Consequently,
\[
 \CW(G_x)-\mathcal L_{\rm CW}(x)=(I-Q)E(x)+O(\sqrt x).
\]
Moreover,
\[
 \sum_{j=2}^{\infty}\frac{1}{j(j-1)\sqrt j}
 =0.5716341209\ldots<1.
\]
Lemma~\ref{lem:dilation-filter} therefore shows that
\eqref{eq:rh-cw} is equivalent to \eqref{eq:von-koch}. In the converse
direction, the $O(\sqrt x)$ composite term is subtracted first and is smaller
than the permitted $O(\sqrt x\log(ex))$ error.
\end{proof}

Unlike $\alpha(G_x)=\pi(x)$, Theorem~\ref{thm:rh-cw} requires neither the
integer labels nor a maximum independent set: once the degree sequence is
given, $\CW(G_x)$ is obtained by a single sum. Thus a strongly compressed,
polynomial-time graph statistic retains the full Riemann-hypothesis error
scale.

The bounded-defect problem now has an additional consequence.

\begin{corollary}\label{cor:conditional-residue-rh}
If Conjecture~\ref{conj:bounded-defect} holds, then the Riemann hypothesis is
equivalent to
\[
 \R(G_x)=\mathcal L_{\rm CW}(x)+O(\sqrt x\log(ex)).
\]
More generally, the same conclusion follows from
\[
 \R(G_x)-\CW(G_x)=O(\sqrt x\log(ex)).
\]
\end{corollary}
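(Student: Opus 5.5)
The plan is to reduce the statement to Theorem~\ref{thm:rh-cw} with the triangle inequality. I will prove the general form first and then specialize.

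Assume $\R(G_x)-\CW(G_x)=O(\sqrt x\log(ex))$. Write
\[
 \R(G_x)-\mathcal L_{\rm CW}(x)
 =\bigl(\R(G_x)-\CW(G_x)\bigr)+\bigl(\CW(G_x)-\mathcal L_{\rm CW}(x)\bigr).
\]
By hypothesis the first bracket is $O(\sqrt x\log(ex))$. Hence $\R(G_x)-\mathcal L_{\rm CW}(x)=O(\sqrt x\log(ex))$ holds if and only if $\CW(G_x)-\mathcal L_{\rm CW}(x)=O(\sqrt x\log(ex))$. In each direction we add or subtract a term of the permitted size, which does not change the error class. Theorem~\ref{thm:rh-cw} states that the second estimate is equivalent to the Riemann hypothesis, and this proves the general statement.

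For the first assertion, Conjecture~\ref{conj:bounded-defect} gives, for every integer $n\geq2$,
\[
 0\leq\R(G_n)-\lceil\CW(G_n)\rceil\leq2 .
\]
Since $0\leq\lceil\CW\rceil-\CW<1$, this implies $0\leq\R(G_n)-\CW(G_n)<3$. With $G_x=G_{\lfloor x\rfloor}$ this holds for all real $x\geq2$, so the defect is $O(1)$, and $O(1)\subseteq O(\sqrt x\log(ex))$ on $x\geq2$. The general statement then applies.

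There is no real obstacle. The only care needed is the bookkeeping of the real-variable convention $G_x=G_{\lfloor x\rfloor}$. The quantity $\mathcal L_{\rm CW}(x)$ is defined for real $x$, and Theorem~\ref{thm:rh-cw} is already phrased in that convention, so no further interpolation between integers is required.
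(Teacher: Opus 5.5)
Your proposal is correct and matches the paper's proof. The general hypothesis lets you replace $\CW(G_x)$ by $\R(G_x)$ without changing the error scale in Theorem~\ref{thm:rh-cw}, and Conjecture~\ref{conj:bounded-defect} supplies the stronger bound $0\leq\R(G_x)-\CW(G_x)<3$; your ceiling bookkeeping and the absorption of $O(1)$ for $x\geq2$ just make explicit what the paper leaves implicit.
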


\begin{proof}
Under either hypothesis, replacing $\CW(G_x)$ by $\R(G_x)$ does not change
the error scale in Theorem~\ref{thm:rh-cw}. Conjecture
\ref{conj:bounded-defect} gives the stronger estimate
$\R(G_x)-\CW(G_x)=O(1)$.
\end{proof}

\section{Computational evidence and AI-assisted methodology}

The historical problem itself came from \emph{Graffiti}, and the present
study deliberately retained that conjecture--example--proof workflow through
the \emph{TxGraffiti}--TheoX line
\cite{Fajtlowicz1988,DavilaTxGraffiti2026}.
TheoX is a software system developed as a continuation of the
\emph{TxGraffiti} automated-conjecturing framework. Under human supervision,
it iterates among exact finite-data generation, automated production of
candidate statements, systematic counterexample searches, and analysis of
possible proofs. For this study, the principal data structure was a finite
machine-readable table. For each integer $n$ in its range, the table recorded
$G_n$, its degree sequence, residue, Caro--Wei sum, prime-class sizes
$m_k(n)$, and the counterexamples that invalidated rejected statements.
These finite data guided the choice of statements; they are not premises in
any proof.

This table first exposed the small observed defect
$\R(G_n)-\CW(G_n)$ and motivated Conjecture~\ref{conj:bounded-defect}.
Searching over increasing values of the prime-class truncation parameter
then suggested
$K=\exp\{(\log n)^\beta\}$; the lower bound for
$|\mathcal S_k(n)|$ imposes
$2\beta<1$, exactly the restriction used in Theorem~\ref{thm:main}.
Comparing the fixed-degree-class and Caro--Wei formulas subsequently led to the
weighted profile in Theorem~\ref{thm:weighted-profile} and the simplicial
twin-degree polynomial. These identities were stated as theorems only after
the associated dilation operators were proved to have bounded inverses on
the required weighted function spaces.

For the defect analysis, a second finite table recorded, at every
Havel--Hakimi step, the change in the Caro--Wei sum, the pivot, the least
selected degree, and the multiplicities of the largest degrees. Factoring the
local increment gave \eqref{eq:heterogeneity-charge}. The computed
trajectories then suggested the unit-band condition that is proved persistent
in Lemma~\ref{lem:unit-band-absorption}. Proportional estimates that failed
near primorial values of $n$ redirected the analysis to the initial
order-statistic bound in Lemma~\ref{lem:static-quantile} and the partition
into reciprocal pivot intervals in \eqref{eq:reciprocal-pivot-bands}. Thus
finite computation isolated the two estimates stated in
Conjectures~\ref{conj:prefix-charge} and \ref{conj:static-envelope}; the
theorems in the paper were proved independently of those computations.

All numerical claims used in figures and tables were checked independently.
Exact rational calculations verify the profile identities and recursive
inverses for every $2\leq n\leq1000$ at
$z\in\{0,1/4,1/2,3/4,9/10,1\}$. Tests on 494 stored graphic sequences and
14,256 randomly generated graphic sequences of order at most 60 found no
violation of the finite terminal formula or the persistence of the unit-band
property. These computations
found indexing errors during development, but no theorem rests on finite
testing.

The workflow was developed at FirstPrinciples Inc. and incorporated OpenAI
language-model services. AI output was treated as a source of candidates and
criticism, not as mathematical authority. The author independently
reconstructed and checked every proof, citation, computation, and passage of
the final manuscript and takes full responsibility for its contents. The
degree-preserving construction in
Lemma~\ref{lem:realization} was independently implemented and checked on
finite instances: every vertex degree is compared before and after the
switches, every proposed new edge is verified to be absent, and every prime
block is verified to induce the required clique. The Supplementary Material
contains this checker, a claim-to-artifact index, the exact profile verifier,
the Havel--Hakimi trajectory generator, dense trajectory tables for every
$4\leq n\leq10{,}000$, selected trajectories through $10^6$, and a
machine-readable verification report.
Additional development records are available from the author on reasonable
request.

\section{Conclusion}

The common-divisor graph problem has developed through several generations
of computer-assisted conjecturing. \emph{Graffiti} first posed a weak
residue bound; Erd\H{o}s and Staton initiated the asymptotic program, found
the correct lower scale, and identified the candidate leading constant;
\emph{TxGraffiti} introduced finite tabular data for automated
conjecturing; and TheoX helped isolate the degree-sequence argument used
here. Theorem~\ref{thm:main} settles the asymptotic constant, gives every
lower-order coefficient explicitly, and proves that the difference between
the residue and the prime-vertex contribution $\cP(n)$ to the Caro--Wei sum is
$O_\beta(n\exp\{-(\log n)^\beta\})$. Its first correction
is governed by the convergent logarithmic series $A$ in
\eqref{eq:constants}. The degree-preserving realization may apply more
broadly to arithmetic graphs with many low-degree vertices organized by
short multiplicative intervals.

Theorems
\ref{thm:rh-profile}--\ref{thm:rh-cw} further show that an algebra of
weighted sums of the simplicial degree profile retain the complete square-root error
scale equivalent to the Riemann hypothesis. In particular, the simplicial
twin-degree polynomial supplies a continuum of such criteria, while the
Caro--Wei sum gives a distinct statistic determined solely by the degree
sequence. These criteria
show that substantial prime-counting information survives after the integer
labels and, in the Caro--Wei case, all graph structure beyond the degree
sequence have been discarded.

The defect analysis makes the remaining
combinatorial challenge substantially sharper. Whenever a graphic
Havel--Hakimi trajectory reaches a unit-band step, every subsequent positive
step is also a unit-band step, and the sum $T_n$ of the subsequent defect
increments is strictly less than $3/2$ by
Theorem~\ref{thm:terminal-band}. The sum $B_n$ of the preceding increments is
bounded above by the initial-degree quantity $E_n$ in
Lemma~\ref{lem:static-quantile}. The sharp $+2$ problem is therefore reduced
to proving $B_n\leq3/2$ for $n\geq15$. The weaker objective of proving any
uniform upper bound on $E_n$ would establish that
$\R(G_n)-\CW(G_n)$ is bounded independently of $n$. Either statement would
show that the residue itself, and
not only the Caro--Wei sum, carries an estimate equivalent to the Riemann
hypothesis.

\section*{CRediT authorship contribution statement}
Randy Davila: Conceptualization, Methodology, Formal analysis, Investigation,
Software, Validation, Visualization, Writing -- original draft, Writing --
review and editing.

\section*{Funding}
No funds, grants, or other support were received for this work.

\section*{Declaration of competing interest}
Randy Davila is affiliated with FirstPrinciples Inc., which develops the
TheoX research system described in the methodological section. The author
declares no other competing financial or personal interests that could have
influenced the work reported in this article.

\section*{Data and code availability}
The finite data reported in this article were generated by exact computation.
The Supplementary Material contains the source code needed to reproduce the
reported identity checks and selected trajectory calculations, the complete
trajectory tables for $4\leq n\leq10{,}000$, selected trajectories through
$10^6$, and a machine-readable audit report. Retained TheoX run records and
additional development artifacts are
available from the author on reasonable request.

\section*{Declaration of generative AI and AI-assisted technologies in the
writing process}
During the preparation of this work, the author used the TheoX research
system, incorporating OpenAI language-model services, for the research and
writing activities described in Section~7, including candidate generation,
computational checking, searches for gaps in draft proofs, and language
revision. The author
reviewed and edited all resulting content, independently checked every proof,
citation, and computation, and takes full responsibility for the published
article.

\begingroup
\small
\bibliographystyle{elsarticle-num}
\bibliography{davila_common_divisor_residue_ejc}
\endgroup

\end{document}